\newcommand{\zed}{\ensuremath{\mathbb Z}}
\newcommand{\eff}{\ensuremath{\mathbb F}}
\newtheorem{Theorem}{Theorem}[section]
\newtheorem{Example}{Example}[section]
\newtheorem{Remark}{Remark}[section]
\newtheorem{Lemma}[Theorem]{Lemma}
\newtheorem{Corollary}[Theorem]{Corollary}
\title {\textbf{Some results on the existence of $t$-all-or-nothing 
transforms over arbitrary alphabets}}
\author{Navid Nasr Esfahani, Ian Goldberg\thanks{Research supported by  NSERC discovery grant RGPIN-341529 } 
\ and Douglas~R.~Stinson\thanks{Research supported by  NSERC discovery grant RGPIN-03882}\\
David R.\ Cheriton School of Computer Science\\ University of Waterloo\\
Waterloo, Ontario N2L 3G1, Canada}
\date{\today}
\begin{document}
\maketitle

\begin{abstract}
A $(t, s, v)$-all-or-nothing transform is a bijective mapping defined
on $s$-tuples over an alphabet of size $v$, which satisfies the condition
that the values of any $t$ input co-ordinates are
completely undetermined, given only the values of any $s-t$ output co-ordinates.
The main question we address in this paper is:
for which choices of parameters does a $(t, s, v)$-all-or-nothing transform (AONT) exist?
More specifically, if we fix $t$ and $v$, we want to determine the maximum
integer $s$ such that a $(t, s, v)$-AONT exists. 
We mainly concentrate on the case $t=2$ for arbitrary values of $v$, where we 
obtain various necessary as well as sufficient conditions for existence of these objects.
We consider both linear and general (linear or nonlinear) AONT. 
We also show some connections between AONT, orthogonal arrays and resilient functions. 
\end{abstract}

\section{Introduction and Previous Results}
\label{intro.sec}

Rivest defined all-or-nothing transforms in \cite{R} in the setting of computational
security. Stinson considered unconditionally secure all-or-nothing transforms in \cite{St}. 
More general types of
unconditionally secure all-or-nothing transforms have been recently studied in \cite{DES,ES,ZZWG}.

We begin with some relevant definitions.
Let $X$ be a finite set of cardinality $v$, called an {\em alphabet}.
Let $s$ be a positive integer, and suppose that $\phi : X^s \rightarrow X^s$.
We will think of $\phi$ as a function that maps
an input $s$-tuple, say ${\mathbf x} = (x_1, \dots , x_s)$, to an
output $s$-tuple, say ${\mathbf y} = (y_1, \dots , y_s)$, where
$x_i,y_i \in X$ for $1 \leq i \leq s$.
Let $1 \leq t \leq s$ be an integer.
Informally, the function $\phi$ is an (unconditionally secure) 
{\em $(t, s,v)$-all-or-nothing transform} provided that 
the following properties are satisfied:
\begin{enumerate}
\item $\phi$ is a bijection.
\item If any $s-t$ of the $s$ output values
$y_1, \dots , y_s$ are
fixed, then the values of any $t$ inputs are
completely undetermined, in an information-theoretic sense.
\end{enumerate}
We will denote such a function as
a $(t, s, v)$-AONT, where $v = |X|$.

We note that any bijection from $X^s$ to itself is an $(s, s, v)$-AONT, so 
the case $s=t$ is trivial.

The work of Rivest \cite{R} and Stinson \cite{St} concerned the case $t=1$.
Rivest's original motivation for AONT involved block ciphers. The idea is
to apply a $(1, s, v)$-AONT to $s$ plaintext blocks, where each plaintext
block is treated as an element over an alphabet of size $v$. After the AONT is
applied the resulting $s$ blocks are then encrypted. The AONT property ensures
that all $s$ ciphertext blocks must be decrypted in order to obtain any information
about any single plaintext block.

Other applications of AONT are enumerated in \cite{DES}, where
AONT (and ``approximations'' to AONT) for $t \geq 2$ were first studied. 
The paper \cite{DES}  mainly considers the case
$t=v=2$. Additional results in this case are found in \cite{ZZWG} and \cite{ES}; 
the latter paper also contains some results for $t=2$, $v=3$.
In this paper, we study AONT for arbitrary values of $v$ and $t$, obtaining
our most detailed results for the case $t=2$.

The definition of AONT can be rephrased in terms of the entropy function  $\mathsf{H}$.  
Let 
\[\mathbf{X_1}, \dots , \mathbf{X_s}, \mathbf{Y_1}, \dots , \mathbf{Y_s}\] be random variables taking on
values in the finite set $X$.  
These $2s$ random variables 
define a $(t, s, v)$-AONT provided that
the following conditions are satisfied:
\begin{enumerate}
\item $\mathsf{H}( \mathbf{Y_1}, \dots , \mathbf{Y_s} \mid \mathbf{X_1}, \dots , \mathbf{X_s}) = 0$.
\item $\mathsf{H}( \mathbf{X_1}, \dots , \mathbf{X_s} \mid \mathbf{Y_1}, \dots , \mathbf{Y_s}) = 0$.
\item For all $\mathcal{X} \subseteq \{\mathbf{X_1}, \dots , \mathbf{X_s}\}$ with 
$|\mathcal{X}|  = t$, and for all $\mathcal{Y} \subseteq \{\mathbf{Y_1}, \dots , \mathbf{Y_s}\}$ with 
$|\mathcal{Y}|  = t$, it holds that 
 \begin{equation}
 \label{t-AONT.eq}
 \mathsf{H}( \mathcal{X}  \mid \{ \mathbf{Y_1}, \dots , \mathbf{Y_s}\}  \setminus \mathcal{Y} ) = \mathsf{H}(\mathcal{X}).
 \end{equation}
\end{enumerate}

Let $\eff_q$ be a finite field of order $q$.
An AONT with alphabet $\eff_q$ is {\em linear}
if each $y_i$ is an $\eff_q$-linear function of 
$x_1, \dots , x_s$. Then, we can write 
\begin{equation}
\label{linear.eq}
{\mathbf y} = \phi({\mathbf x}) = {\mathbf x}M^{-1}
\quad \text{and} \quad  {\mathbf x} = \phi^{-1}({\mathbf y}) = {\mathbf y}M,
\end{equation}
where $M$ is an invertible $s$ by $s$ matrix with entries from $\eff_q$. 
Subsequently, when we refer to a ``linear AONT'', we mean the matrix $M$ that
transforms ${\mathbf y}$ to ${\mathbf x}$, as specified in (\ref{linear.eq}).

The following lemma from \cite{DES} characterizes 
linear all-or-nothing transforms in terms of submatrices of the matrix $M$.

\begin{Lemma}
\cite[Lemma 1]{DES}
\label{linear}
Suppose that $q$ is a prime power and $M$ is 
an invertible $s$ by $s$ matrix with entries from $\eff_q$. 
Then $M$ defines a linear $(t, s, q)$-AONT if and only if every 
$t$ by $t$ submatrix of $M$ is invertible.
\end{Lemma}

\begin{Remark}
\label{ssq.rem}
Any invertible  $s$ by $s$ matrix with entries from $\eff_q$
defines a linear $(s, s, q)$-AONT.
\end{Remark}

An $s$ by $s$ \emph{Cauchy matrix} can be defined over $\eff_q$ 
if $q\geq 2s$. Let $a_1, \dots , a_s,b_1, \dots , b_s$ be distinct elements of
$\eff_q$. Let $c_{ij} = 1/(a_i-b_j)$, for $1 \leq i \leq s$ and $1 \leq j \leq s$.
Then $C = (c_{ij})$ is the Cauchy matrix defined by the sequence  $a_1, \dots , a_s,b_1, \dots , b_s$.
The most important property of a Cauchy matrix $C$ is that any square submatrix of $C$ (including $C$
itself) is invertible over $\eff_q$.

Cauchy matrices were briefly mentioned in \cite{St} as a possible method of 
constructing $1$-AONT. 
It was noted in \cite{DES} that, when $ q \geq 2s$,
Cauchy matrices immediately yield the strongest possible all-or-nothing transforms, 
as stated in the following theorem.

\begin{Theorem}
\cite[Theorem 2]{DES}
\label{cauchy}
Suppose $q$ is a prime power and $q \geq 2s$. Then there is a linear transform
that is simultaneously a $(t,s,q)$-AONT for all $t$ such that $1 \leq t \leq s$.
\end{Theorem}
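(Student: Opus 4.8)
The plan is to exhibit a single matrix that serves for every $t$ at once, namely the Cauchy matrix $C$ described above, and then read off the conclusion from the characterization in Lemma \ref{linear}. Since $q \geq 2s$, the field $\eff_q$ has at least $2s$ elements, so we may select distinct $a_1, \dots, a_s, b_1, \dots, b_s \in \eff_q$ and form $C = (c_{ij})$ with $c_{ij} = 1/(a_i - b_j)$. The entire argument reduces to combining two facts already on record: that every square submatrix of a Cauchy matrix is invertible, and the submatrix criterion of Lemma \ref{linear}.

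First I would fix an arbitrary $t$ with $1 \leq t \leq s$ and check the hypotheses of Lemma \ref{linear} for the choice $M = C$. The matrix $C$ is invertible, being itself a (maximal) square submatrix of a Cauchy matrix. For the submatrix condition, I would observe that choosing any rows $i_1 < \cdots < i_t$ and any columns $j_1 < \cdots < j_t$ produces a $t$ by $t$ array whose $(\ell, m)$ entry is $1/(a_{i_\ell} - b_{j_m})$; this is again a Cauchy matrix, now built from the distinct elements $a_{i_1}, \dots, a_{i_t}, b_{j_1}, \dots, b_{j_t}$, and is therefore invertible. Hence every $t$ by $t$ submatrix of $C$ is invertible, and Lemma \ref{linear} shows that $C$ defines a linear $(t, s, q)$-AONT.

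The final step is to note that this conclusion holds for the same matrix $C$ regardless of $t$: the construction of $C$ never referred to $t$, and the submatrix argument applies verbatim for each $t$ in the range $1 \leq t \leq s$. Therefore the single transform $C$ is simultaneously a $(t, s, q)$-AONT for all such $t$, which is exactly the claim.

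There is essentially no obstacle once the submatrix property of Cauchy matrices is granted; the only point demanding care is verifying that a sub-selection of rows and columns of $C$ is itself a genuine Cauchy matrix, which is immediate because the defining elements remain pairwise distinct. If one preferred a self-contained treatment rather than quoting that property, the main work would shift to establishing the Cauchy determinant identity \[\det C = \frac{\prod_{1 \leq i < j \leq s} (a_j - a_i)(b_i - b_j)}{\prod_{i,j}(a_i - b_j)},\] from which the nonvanishing of all square subdeterminants follows directly; but since that property is already stated above, I would simply invoke it.
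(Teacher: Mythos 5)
Your proposal is correct and follows exactly the argument the paper intends: take a Cauchy matrix over $\eff_q$ (possible since $q \geq 2s$), use the fact that every square submatrix of a Cauchy matrix is invertible, and apply Lemma \ref{linear} for each $t$ simultaneously. This matches the paper's treatment, which presents the theorem as an immediate consequence of the Cauchy matrix property stated just before it.
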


We observe that, in general, the existence of a $(t,s,q)$-AONT
does not necessarily imply the existence of a $(t-1,s,q)$-AONT
or a $(t+1,s,q)$-AONT.

We next review some results on general (i.e., linear or nonlinear) AONT.
Let $A$ be an $N$ by $k$ array whose entries are elements chosen from 
an alphabet $X$ of size $v$.  
We will refer to $A$ as an {\em $(N,k,v)$-array}. 
Suppose the columns of $A$ are
labelled by the elements in the set $C = \{ 1, \dots , k\}$.  
Let $D \subseteq C$, and define $A_D$ to be the array obtained from $A$
by deleting all the columns $c \notin D$.
We say that $A$ is
{\em unbiased} with respect to $D$ if the rows of
$A_D$ contain every $|D|$-tuple of elements of $X$ 
exactly $N / v^{|D|}$ times.

The following result characterizes $(t,s,v)$-AONT in terms of
arrays that are unbiased with respect to
certain subsets of columns.

\begin{Theorem}
\cite[Theorem 34]{DES}
\label{equiv}
A $(t,s,v)$-AONT is equivalent to a $(v^s,2s,v)$-array
that is unbiased with respect to the following subsets of columns:
\begin{enumerate}
\item $\{1, \dots , s\}$,
\item $\{s+1, \dots , 2s\}$, and
\item $I \cup \{ s+1, \dots , 2s\} \setminus J$, 
         for all $I \subseteq \{1,\dots , s\}$ with $|I| = t$ and all 
         $J \subseteq \{s+1,\dots , 2s\}$ with $|J| = t$.
\end{enumerate}
\end{Theorem}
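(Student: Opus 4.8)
The plan is to set up an explicit bijection between $(t,s,v)$-AONT and the arrays described, and then translate each defining condition of the AONT into the corresponding unbiasedness condition. Given a $(t,s,v)$-AONT $\phi$, I would build the $(v^s,2s,v)$-array $A$ whose rows are exactly the $v^s$ vectors $(\mathbf{x},\phi(\mathbf{x})) = (x_1,\dots,x_s,y_1,\dots,y_s)$, one for each input $\mathbf{x}\in X^s$. Conversely, starting from an array $A$ satisfying the three unbiasedness conditions, I would read off a candidate map $\phi$ by declaring $\phi(\mathbf{x})$ to be the output half of the row whose input half equals $\mathbf{x}$, and then verify it is a genuine AONT. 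Throughout I will take the inputs to be uniformly distributed, so that "completely undetermined'' means "uniformly distributed'', matching the entropy conditions.

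First I would handle the bijection. Unbiasedness with respect to $\{1,\dots,s\}$ forces each input $s$-tuple to occur exactly $v^s/v^s = 1$ time among the rows, so the input halves of the rows are precisely the $v^s$ distinct elements of $X^s$; this is what lets us define $\phi$ as a function of $\mathbf{x}$, and it matches entropy condition (1). Symmetrically, unbiasedness with respect to $\{s+1,\dots,2s\}$ says each output $s$-tuple occurs exactly once, which is surjectivity, hence bijectivity, of $\phi$ and matches entropy condition (2). The forward direction is immediate: the graph $\{(\mathbf{x},\phi(\mathbf{x}))\}$ of a bijection has each input and each output appearing once.

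The substantive step is condition (3). Fix $I\subseteq\{1,\dots,s\}$ and $J\subseteq\{s+1,\dots,2s\}$ with $|I|=|J|=t$; then $\{s+1,\dots,2s\}\setminus J$ indexes a set of $s-t$ output coordinates, and the column set $D = I\cup(\{s+1,\dots,2s\}\setminus J)$ has size $t+(s-t)=s$, so unbiasedness with respect to $D$ again means every $s$-tuple over these columns appears exactly once. I would show this count is equivalent to the AONT security requirement that, conditioned on any fixed values of the $s-t$ outputs indexed by $\{s+1,\dots,2s\}\setminus J$, the $t$ inputs indexed by $I$ are uniformly distributed over $X^t$; in the notation of (\ref{t-AONT.eq}), this is $\mathsf{H}(\mathcal{X}\mid\{\mathbf{Y_1},\dots,\mathbf{Y_s}\}\setminus\mathcal{Y})=\mathsf{H}(\mathcal{X})$. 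Letting $I$ range over all $t$-subsets of the inputs and $J$ over all $t$-subsets of the outputs recovers "any $t$ inputs'' and "any $s-t$ outputs'', so the two families of conditions correspond exactly.

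The main obstacle, and the place requiring care, is pinning down this count precisely. The key observation I will need is that because $\phi$ is a bijection the full output is uniform on $X^s$, so any $(s-t)$-subset of the output coordinates is itself uniform on $X^{s-t}$ and occurs in exactly $v^s/v^{s-t}=v^t$ rows. Given a fixing $b$ of the observed outputs, "the $t$ inputs are uniform'' then means each of the $v^t$ input patterns indexed by $I$ occurs exactly once among those $v^t$ rows, i.e. each $s$-tuple over $D$ occurs exactly once, which is precisely $D$-unbiasedness; the converse direction unwinds the same counting. The one thing to watch is that this argument genuinely uses bijectivity (conditions 1 and 2) to guarantee uniformity of the observed output sub-tuple: mere independence of the inputs from the outputs would not by itself yield the constant "appears exactly once'' count demanded by the definition of unbiased.
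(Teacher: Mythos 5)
Your argument is correct. Note, however, that the paper you are being compared against does not actually prove this statement: Theorem \ref{equiv} is quoted verbatim from \cite[Theorem 34]{DES} with no proof given here, so there is no in-paper argument to contrast with yours. Your proof is the standard one for this equivalence --- identify the AONT with the $v^s \times 2s$ array of rows $(\mathbf{x},\phi(\mathbf{x}))$, read conditions (1) and (2) as well-definedness and bijectivity, and observe that since $|I \cup (\{s+1,\dots,2s\}\setminus J)| = s$, unbiasedness there means each $s$-tuple occurs exactly once, which (using the count of $v^t$ rows per fixed $(s-t)$-tuple of outputs supplied by condition (2)) is exactly uniformity of the $t$ inputs given the $s-t$ observed outputs --- and your explicit remark that the ``exactly once'' count depends on bijectivity, not merely on independence, is precisely the point that needs care.
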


An OA$_{\lambda}(t,k,v)$ (an {\em orthogonal array})
is a $(\lambda v^t,k,v)$-array that is unbiased with respect to
any subset of $t$ columns. If $\lambda = 1$, then we simply write
the orthogonal array as an OA$(t,k,v)$.

The following corollary of Theorem \ref{equiv} is immediate.

\begin{Corollary}
\cite[Corollary 35]{DES}
If there exists an OA$(s,2s,v)$, then there exists a $(t,s,v)$-AONT
for all $t$ such that $1 \leq t \leq s$.
\end{Corollary}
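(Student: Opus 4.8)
The plan is to invoke Theorem~\ref{equiv} as the bridge between orthogonal arrays and AONT, so that the entire corollary reduces to checking that an OA$(s,2s,v)$ is unbiased with respect to exactly the three families of column-subsets listed in that theorem. First I would recall the defining property of an OA$(s,2s,v)$: being an OA with strength $s$ and $\lambda=1$, it is a $(v^s,2s,v)$-array that is unbiased with respect to \emph{every} set of $s$ columns. The key observation is that each subset of columns demanded by Theorem~\ref{equiv} has size at most $s$, so unbiasedness on those subsets should follow from unbiasedness on all $s$-subsets.

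The main step is therefore to justify the downward-closure principle: if an array is unbiased with respect to every $s$-subset of columns, then it is unbiased with respect to every subset of size $u \le s$. I would prove this by taking a $u$-subset $D$ of columns, extending it to an $s$-subset $D'$, and then summing (marginalizing) over the $s-u$ extra columns. Since every $s$-tuple appears exactly $v^s/v^s = 1$ time in the columns of $D'$, collapsing the extra coordinates shows each $u$-tuple on $D$ appears exactly $v^{s-u}$ times, which equals $v^s/v^{|D|}$, the count required by the definition of unbiased. Here I must confirm that $\lambda=1$ makes the relevant count $N/v^{|D|}=v^s/v^{|D|}$ an integer and matches the definition used in the excerpt.

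Next I would verify that all three column families in Theorem~\ref{equiv} have size at most $s$. Families (1) and (2) have size exactly $s$, so they are covered directly by the OA property. Family (3) consists of sets of the form $I \cup (\{s+1,\dots,2s\}\setminus J)$ with $|I|=t$ and $|J|=t$; since $\{s+1,\dots,2s\}\setminus J$ has size $s-t$ and $I$ is disjoint from it (as $I\subseteq\{1,\dots,s\}$), each such set has size $t+(s-t)=s$. Thus every required subset has size exactly $s$, and the OA$(s,2s,v)$ is unbiased with respect to each of them. By Theorem~\ref{equiv}, the array is equivalent to a $(t,s,v)$-AONT, and since this holds for every $t$ with $1\le t\le s$ (the construction of family~(3) is valid for each such $t$), the corollary follows.

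The main obstacle I anticipate is purely bookkeeping rather than conceptual: one must be careful that the array produced by an OA$(s,2s,v)$ genuinely matches the format required in Theorem~\ref{equiv} (a $(v^s,2s,v)$-array, which it is since $\lambda v^s = v^s$), and that the unbiasedness counts line up exactly ($N/v^{|D|}$ with $N=v^s$). The marginalization argument is routine but should be stated cleanly to avoid off-by-one or indexing confusion between the two blocks of columns $\{1,\dots,s\}$ and $\{s+1,\dots,2s\}$. Once the downward-closure lemma is in hand, the rest is immediate from Theorem~\ref{equiv}.
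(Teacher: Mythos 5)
Your proposal is correct and matches the paper's (implicit) argument: the paper states this corollary as immediate from Theorem~\ref{equiv}, precisely because every column-subset required there has size exactly $s$, so an OA$(s,2s,v)$ satisfies all the unbiasedness conditions. Your auxiliary downward-closure lemma is harmless but unnecessary, as you yourself observe once you compute that each set in family (3) has cardinality $t+(s-t)=s$.
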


For prime powers $q$, the existence of $(1,s,q)$-AONT has 
been completely determined in \cite{St}.

\begin{Theorem}
\cite[Corollary 2.3]{St}
There exists a linear  $(1,s,q)$-AONT for all prime powers $q > 2$ and for all positive integers $s$.
\end{Theorem}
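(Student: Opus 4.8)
The plan is to reduce the statement to a concrete matrix construction via Lemma~\ref{linear}. For $t=1$, a $1$ by $1$ submatrix of $M$ is just a single entry, and such a submatrix is invertible precisely when that entry is nonzero. Hence Lemma~\ref{linear} tells us that an invertible $s$ by $s$ matrix $M$ over $\eff_q$ defines a linear $(1,s,q)$-AONT if and only if every entry of $M$ is nonzero. So the whole problem reduces to exhibiting, for each prime power $q>2$ and each positive integer $s$, an invertible $s$ by $s$ matrix over $\eff_q$ having no zero entries.

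First I would isolate what the hypothesis $q>2$ buys us: it guarantees an element $a \in \eff_q$ with $a \notin \{0,1\}$. I would then build $M$ from this single element in a way that works uniformly in $s$. The construction I propose is the ``staircase'' matrix $M = (m_{ij})$ defined by $m_{ij}=1$ when $i \ge j$ and $m_{ij}=a$ when $i<j$; that is, the lower-triangular part (including the diagonal) is filled with $1$'s and the strictly upper-triangular part is filled with $a$'s. Since $1 \ne 0$ and $a \ne 0$, every entry of $M$ is nonzero, so the only remaining task is to verify that $M$ is invertible.

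For invertibility I would use elementary row operations, which do not change the determinant. Subtracting consecutive rows, $R_i \mapsto R_i - R_{i+1}$ for $i=1,\dots,s-1$ (working from the top down so that each operation uses the original rows), collapses row $i$ to $(a-1)\,e_{i+1}$, where $e_{i+1}$ is the standard basis vector, while the last row remains $(1,1,\dots,1)$. Expanding the resulting matrix along its first column, whose only nonzero entry is the $1$ in the last row, leaves a diagonal $(s-1)$ by $(s-1)$ matrix with diagonal entries $a-1$, so $\det M = \pm (a-1)^{s-1}$. Because $a \ne 1$, this determinant is nonzero and $M$ is invertible; combined with the previous paragraph, $M$ defines a linear $(1,s,q)$-AONT, completing the argument.

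The crux of the proof is obtaining a construction valid for all $s$ from the minimal hypothesis $q>2$, and I expect this uniformity to be the main obstacle, since the obvious candidates fail. A Vandermonde matrix with distinct nonzero nodes has all entries nonzero but requires $s \le q-1$ nodes, and the symmetric candidate $J + (a-1)I$ (all-ones off the diagonal, $a$ on it) has determinant $(a-1)^{s-1}(a-1+s)$, which can be forced to vanish for every admissible $a$ in small fields such as $\eff_3$ (e.g.\ $s \equiv 2 \pmod 3$, where the only choice $a=2$ gives $a-1+s=0$). The staircase matrix sidesteps both difficulties because its determinant depends only on $a-1$ and never on $s$, so a single element $a \notin \{0,1\}$ suffices for arbitrarily large $s$.
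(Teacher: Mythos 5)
Your argument is correct, but note that the paper does not actually prove this statement: it is imported verbatim from Stinson's earlier work as \cite[Corollary 2.3]{St}, so there is no in-paper proof to match against. What you have written is a legitimate self-contained proof. The reduction via Lemma~\ref{linear} is exactly right (for $t=1$ the submatrix condition says precisely that every entry is nonzero, and the lemma separately requires $M$ invertible), and your determinant computation checks out: after $R_i \mapsto R_i - R_{i+1}$ for $i=1,\dots,s-1$, row $i$ becomes $(a-1)e_{i+1}$, the last row stays all ones, and cofactor expansion along the first column gives $\det M = \pm(a-1)^{s-1} \neq 0$. Your closing remark is also well taken: the ``obvious'' symmetric candidate $J+(a-1)I$, which is essentially the construction used in Stinson's original proof, has determinant $(a-1)^{s-1}(a-1+s)$ and so requires an element $a \notin \{0,1,1-s\}$; over $\eff_3$ this genuinely fails when $s \equiv 2 \pmod 3$, and the original argument has to work around exactly that constraint. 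Your staircase matrix removes the dependence of the determinant on $s$ entirely, which is a clean simplification. The only (trivial) loose end is $s=1$, where the row-operation argument is vacuous but the matrix $(1)$ obviously suffices.
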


When $q=2$, we have the following.

\begin{Theorem}
\cite[Theorem 3.5]{St}
There does not exist a  $(1,s,2)$-AONT for any integer $s> 1$.
\end{Theorem}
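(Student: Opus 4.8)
The plan is to work with the combinatorial reformulation of the AONT property and extract a direct contradiction from the bijectivity of $\phi$. Since $v=2$, I would identify the alphabet $X$ with $\eff_2$, so that inputs and outputs live in $\eff_2^s$ and componentwise addition is available, and I will argue about $\phi^{-1}$, the map sending an output tuple $\mathbf{y}$ to its unique input preimage $\mathbf{x}$.

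First I would unpack what the $t=1$ security condition means concretely when $v=2$. Fix an output coordinate $j$, and fix the remaining $s-1$ outputs to arbitrary values; since $\phi$ is a bijection and there are only two choices for the freed coordinate $y_j$, exactly two input tuples are consistent with this partial output, namely $\phi^{-1}(\mathbf{y})$ and $\phi^{-1}(\mathbf{y} + \mathbf{e}_j)$, where $\mathbf{e}_j$ is the $j$-th unit vector. The requirement that every single input $x_i$ be completely undetermined given these $s-1$ outputs forces $x_i$ to take both values $0$ and $1$ across these two preimages, for every coordinate $i$. Hence the two preimages must differ in every coordinate, i.e.\ they are complementary. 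This yields the functional equation $\phi^{-1}(\mathbf{y} + \mathbf{e}_j) = \phi^{-1}(\mathbf{y}) + \mathbf{1}$, valid for all $\mathbf{y} \in \eff_2^s$ and all $j$, where $\mathbf{1}$ is the all-ones vector. (Equivalently, one can read this off from Theorem~\ref{equiv}: fixing $s-1$ output columns of the associated array leaves exactly two rows, which must be balanced in every input column.)

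With the functional equation in hand, the contradiction is immediate for $s>1$. Pick two distinct coordinates $j \neq k$, which exist precisely because $s>1$. Applying the equation twice gives $\phi^{-1}(\mathbf{y} + \mathbf{e}_j + \mathbf{e}_k) = \phi^{-1}(\mathbf{y} + \mathbf{e}_j) + \mathbf{1} = \phi^{-1}(\mathbf{y}) + \mathbf{1} + \mathbf{1} = \phi^{-1}(\mathbf{y})$, since $\mathbf{1} + \mathbf{1} = \mathbf{0}$ over $\eff_2$. But $\phi^{-1}$ is injective, so $\mathbf{y} + \mathbf{e}_j + \mathbf{e}_k = \mathbf{y}$, forcing $\mathbf{e}_j = \mathbf{e}_k$ and hence $j = k$, a contradiction. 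Therefore no $(1,s,2)$-AONT can exist when $s>1$.

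I expect the only delicate step to be the first one: translating the information-theoretic phrase ``completely undetermined'' into the clean statement that the two preimages are complementary. Everything afterwards is a one-line computation in $\eff_2^s$ exploiting the characteristic-two identity $\mathbf{1}+\mathbf{1}=\mathbf{0}$, so the heart of the argument is recognizing that the $t=1$, $v=2$ condition is rigid enough to pin down $\phi^{-1}$ up to complementation along each single output flip.
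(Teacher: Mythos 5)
Your proof is correct. The paper itself gives no proof of this statement --- it is quoted from \cite[Theorem 3.5]{St} --- and your argument is the standard one for that result: the $t=1$, $v=2$ condition forces the two preimages obtained by flipping a single output bit to be complementary, and composing two such flips over $\eff_2$ contradicts the injectivity of $\phi^{-1}$ when $s>1$.
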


\subsection{Organization of the Paper}

Section \ref{linear.sec} deals with linear AONT. First, we give a construction
for certain $(2,s,q)$-AONT as well as a nonexistence result. 
In Section \ref{2qq.sec}, we focus on $(2,q,q)$-AONT and in Section \ref{computer.sec}
we report the results
of some enumerations of small cases. 
Section \ref{equivalence.sec} discusses the notion of equivalence
of linear AONT. Section \ref{lin-gen} examines linear
$(t,s,q)$-AONT and shows a connection with linear $t$-resilient functions. 
Section \ref{general.sec} shows some relations between 
general AONT, orthogonal arrays and resilient functions.
Finally, Section \ref{summary.sec} summarizes the paper and gives some open problems.

\section{New Results on Linear AONT}
\label{linear.sec}


We begin this section with a construction.

\begin{Theorem}
\label{vandermonde}
Suppose $q = 2^n$, $q-1$ is prime and $s \leq q-1$.
Then there exists a linear $(2,s,q)$-AONT over $\eff_q$.
\end{Theorem}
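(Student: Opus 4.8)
The plan is to invoke Lemma~\ref{linear}, which reduces the problem to exhibiting an invertible $s$ by $s$ matrix $M$ over $\eff_q$ in which every $2$ by $2$ submatrix is also invertible. The name of the theorem points to the natural candidate, a Vandermonde matrix. First I would fix $s$ distinct \emph{nonzero} field elements $a_1, \dots, a_s \in \eff_q^{\ast}$, which is possible precisely because $s \le q-1 = |\eff_q^{\ast}|$, and set $M = (a_i^{\,j-1})_{1 \le i,j \le s}$. Since the $a_i$ are distinct, $M$ is a nonsingular Vandermonde matrix, so the requirement that $M$ itself be invertible is immediate; the insistence on nonzero $a_i$ is what will make the submatrix condition work, since a zero node would produce a row with too many zeros once $s \ge 3$.

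Next I would check that every $2$ by $2$ submatrix is invertible. For rows $i_1 \ne i_2$ and columns $j_1 < j_2$, the relevant determinant is $a_{i_1}^{\,j_1-1} a_{i_2}^{\,j_2-1} - a_{i_1}^{\,j_2-1} a_{i_2}^{\,j_1-1}$, which factors as $a_{i_1}^{\,j_1-1} a_{i_2}^{\,j_1-1}\bigl(a_{i_2}^{\,d} - a_{i_1}^{\,d}\bigr)$ with $d = j_2 - j_1$. Because the $a_i$ are nonzero, this vanishes if and only if $(a_{i_1}/a_{i_2})^{d} = 1$, where the exponent satisfies $1 \le d \le s-1 \le q-2$.

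The crux is where the primality of $q-1$ enters. Writing $\beta = a_{i_1}/a_{i_2}$, we have $\beta \ne 1$ because $a_{i_1} \ne a_{i_2}$. The order of $\beta$ in the cyclic group $\eff_q^{\ast}$ divides $|\eff_q^{\ast}| = q-1$; since $q-1$ is prime and $\beta \ne 1$, that order is forced to equal $q-1$, so $\beta$ is in fact primitive. Hence $\beta^{d} \ne 1$ for every $d$ with $1 \le d \le q-2$, and in particular for all the gaps $d$ arising above. Thus no $2$ by $2$ minor vanishes, and $M$ defines a linear $(2,s,q)$-AONT by Lemma~\ref{linear}. I expect the only delicate point to be the bookkeeping confirming that $d$ never reaches the order $q-1$; this is exactly the role of the hypothesis $s \le q-1$, which keeps every exponent gap strictly below the prime order of $\beta$. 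Notice that $q = 2^n$ is used only insofar as it is what makes $q-1$ prime possible in a nontrivial range of $s$; the argument itself needs only that $q-1$ is prime.
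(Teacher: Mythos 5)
Your proof is correct and takes essentially the same approach as the paper: a Vandermonde matrix over $\eff_q$ whose $2 \times 2$ minors are controlled by the primality of $q-1$ (the paper uses the specific nodes $1, \alpha, \alpha^2, \dots, \alpha^{s-1}$ for a primitive element $\alpha$ and phrases the key step as $(i-j)(i'-j') \not\equiv 0 \bmod (q-1)$, which is your order-of-$\beta$ argument in exponent form). Your version is marginally more general in allowing arbitrary distinct nonzero nodes, but the underlying mechanism is identical.
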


\begin{proof}
Let $\alpha \in \eff_q$ be a primitive element and let $M = (m_{r,c})$ be the $s$ by $s$ Vandermonde matrix 
in which $m_{r,c} = \alpha^{rc}$, $0 \leq r,c \leq s-1$. Clearly $M$ is invertible, so
we only need to show that any $2$ by $2$ submatrix is invertible. Consider a submatrix $M'$
defined by rows $i, j$ and columns $i',j'$, where
$i \neq j$ and $i' \neq j'$. We have 
\[ \det(M') = \alpha ^{ii' + jj'} - \alpha ^{ij' + ji'}, \]
so $\det(M') = 0$ if and only if $\alpha ^{ii' + jj'} = \alpha ^{ij' + ji'}$, which 
happens if and only if 
\[ ii' + jj' \equiv ij' + ji' \bmod (q-1).\]
This condition is equivalent to 
\[ (i-j)(i' - j') \equiv 0 \bmod (q-1).\]
Since $q-1$ is prime, this happens if and only if $i = i'$ or $j = j'$.
We assumed $i \neq j$ and $i' \neq j'$, so we conclude that $M'$ is invertible.
\end{proof}

The above result requires that $2^n-1$ is a (Mersenne) prime. Here are a couple of results on
Mersenne primes from \cite{Mersenne}. The first few Mersenne primes
occur for \[n = 2,3,5,7,13,31,61,89,107,127.\] At the time this paper was written, 
there were 49 known Mersenne primes, the largest being  $2^{74207281}-1$, 
which was discovered in January 2016.

\bigskip

If we ignore the requirement that a linear AONT is an invertible matrix,
then a construction for $q$ by $q$ matrices is easy.

\begin{Theorem} For any prime power $q$, there is a $q$ by $q$ matrix 
defined over $\eff_q$ such that any $2$ by $2$ submatrix is invertible.
\end{Theorem}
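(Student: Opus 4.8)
The plan is to give an explicit construction. The crucial observation is that this theorem drops the invertibility requirement on the whole matrix, asking only that each $2$ by $2$ submatrix be invertible, so we have a great deal of freedom. In particular I would abandon the Vandermonde approach of Theorem~\ref{vandermonde}: that construction forced the $2$ by $2$ minors to be nonzero by arguing that $(i-j)(i'-j') \not\equiv 0 \pmod{q-1}$, which used that $q-1$ is prime and fails as soon as $q-1$ is composite. Instead I would use an \emph{additive} (rather than multiplicative) ``sum matrix.'' Concretely, index both the rows and columns of $M$ by the elements of $\eff_q$, and define the entry in row $a$ and column $b$ to be $m_{a,b} = a+b$. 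This is a $q$ by $q$ matrix over $\eff_q$; it is far from invertible (each entry splits into a row term plus a column term, so $M$ has rank at most $2$), but that is irrelevant here.

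The key step is then a one-line determinant computation. A typical $2$ by $2$ submatrix is selected by two distinct rows $a \neq a'$ and two distinct columns $b \neq b'$, giving
\[
\begin{pmatrix} a+b & a+b' \\ a'+b & a'+b' \end{pmatrix}.
\]
Expanding, the products $aa'$ and $bb'$ cancel, and the remaining cross terms factor:
\[
(a+b)(a'+b') - (a+b')(a'+b) = (a-a')(b'-b).
\]
Since $a \neq a'$ and $b \neq b'$, both factors are nonzero, so the determinant is nonzero and every such submatrix is invertible, as required.

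This verification is completely routine, so there is no genuine technical obstacle in the proof; the only real ``work'' lies in selecting the right matrix. The point worth emphasizing is the contrast with the naive multiplicative choice $m_{a,b} = ab$, whose $2$ by $2$ minors all vanish identically ($ab\cdot a'b' - ab'\cdot a'b = 0$); it is precisely the additive structure that lets the cross terms survive and factor cleanly. One could equally well take $m_{a,b} = x_a + y_b$ for any enumerations $(x_a)$ and $(y_b)$ of $\eff_q$ by distinct elements, since the same computation yields $(x_a - x_{a'})(y_{b'} - y_b)$, but the choice $m_{a,b}=a+b$ is the simplest to state.
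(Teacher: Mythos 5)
Your construction $m_{a,b}=a+b$ and the determinant computation $(a+b)(a'+b')-(a+b')(a'+b)=(a-a')(b'-b)$ are exactly the paper's proof, which uses the same sum matrix $m_{r,c}=r+c$ and the same factorization of the $2\times 2$ minor. The proposal is correct and takes essentially the same approach.
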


\begin{proof}
$M = (m_{r,c})$ be the $q$ by $q$ matrix of entries from $\eff_q$ defined by
the rule $m_{r,c} = r + c$, where the sum is computed in $\eff_q$.
Consider a submatrix $M'$
defined by rows $i, j$ and columns $i',j'$, where
$i \neq j'$ and $i' < j'$. We have 
\[ \det(M') = {ii' + jj'} - {(ij' + ji')}, \]
so $\det(M') = 0$ if and only if ${ii' + jj'} = {ij' + ji'}$.
This condition is equivalent to 
\[ (i-j)(i' - j') = 0,\]
which happens if and only if $i = i'$ or $j = j'$.
We assumed $i \neq j$ and $i' \neq j'$, so we conclude that $M'$ is invertible.
\end{proof}

We note that the above construction does not yield an AONT for $q > 2$, because
the sum of all the rows of the constructed matrix $M$ is the all-zero vector
and hence $M$ is not invertible.

We next define a ``standard form'' for linear AONT.
Suppose $M$ is a matrix for a linear $(2,s,q)$-AONT.
Clearly there can be at  most one zero in each row and column of $M$.
Then we can permute the rows and columns so that the 0's comprise the
first $\mu$ entries on the main diagonal of $M$. 
If $\mu = 0$, then 
we can multiply rows and columns by
nonzero field elements so that all the entries in the first row and first column 
consist of $1$'s.
If $\mu \neq 0$, we can multiply rows and columns by
nonzero field elements so that all the entries in the first row and first column 
consist of $1$'s, except for the entry in the top left corner, which is a $0$.
Such a matrix $M$ is said to be of \emph{type $\mu$ standard form}.

\begin{Theorem} 
\label{nonexistence}
There is no linear $(2,q+1,q)$-AONT for any prime power  $q > 2$.
\end{Theorem}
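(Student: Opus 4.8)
The plan is to argue by contradiction. By Lemma~\ref{linear}, a linear $(2,q+1,q)$-AONT is an invertible $(q+1)\times(q+1)$ matrix $M$ over $\eff_q$ all of whose $2\times 2$ submatrices are invertible, so I would assume such an $M$ exists and work toward a contradiction. The key realization is that the $2\times 2$-minor condition by itself is \emph{not} contradictory at this size: matrices with every $2\times 2$ submatrix invertible do exist (indeed the second theorem of this section produces one). Hence the contradiction must come from violating the \emph{invertibility} of $M$, and the goal should be to exhibit a nontrivial linear dependence among its rows.

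First I would pin down the zero pattern. Fixing two rows $i\neq j$ and setting $w_c=(m_{i,c},m_{j,c})\in\eff_q^{\,2}$ for each column $c$, the invertibility of the submatrix on rows $i,j$ and columns $c,c'$ says exactly that $w_c$ and $w_{c'}$ are linearly independent. Thus the $q+1$ vectors $w_1,\dots,w_{q+1}$ are pairwise independent, so they represent each of the $q+1$ one-dimensional subspaces of $\eff_q^{\,2}$ exactly once. Since the subspace $\langle(0,1)\rangle$ must occur, some column has $m_{i,c}=0$; as $i$ and $j$ are arbitrary this forces every row to contain a zero, and at most one is possible (two zeros in a row give a singular $2\times 2$ submatrix). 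Observing that $w_c\neq(0,0)$ shows the zeros of distinct rows lie in distinct columns, so the zeros form a transversal. Permuting columns and then scaling rows and columns, I would put $M$ into the type-$(q+1)$ standard form: zero diagonal, first row and first column equal to $1$ off the corner, every other entry nonzero. All of these operations preserve both the $2\times 2$-minor property and (non)singularity.

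Next I would show that the ``interior'' is Latin. For $r\geq 2$ and distinct $c,c'\geq 2$, the submatrix on rows $1,r$ and columns $c,c'$ has determinant $m_{r,c'}-m_{r,c}$ (the first row being all ones there), so the entries of row $r$ in columns $2,\dots,q+1$ are distinct, hence a permutation of $\eff_q$; the symmetric argument using the all-ones first column shows each column $j\geq 2$ restricts to a permutation of $\eff_q$ on rows $2,\dots,q+1$.

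The contradiction then comes from summing rows $2,\dots,q+1$. In column $1$ the sum is $q\cdot 1=0$ in $\eff_q$, while in each column $j\geq 2$ the sum is $\sum_{a\in\eff_q}a$, which equals $0$ precisely because $q>2$. Hence these $q$ rows sum to the zero vector, a nontrivial dependence, so $M$ is singular, contradicting its invertibility. I expect the main obstacle to be the structural normalization in the middle steps: once the transversal and the Latin structure are established the final dependence is immediate, and it is exactly the identity $\sum_{a\in\eff_q}a=0$ (which fails only for $q=2$) that both closes the argument and explains why the hypothesis $q>2$ is needed.
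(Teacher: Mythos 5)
Your proposal is correct and follows essentially the same route as the paper: a counting argument over the two-dimensional column pairs (your ``$q+1$ pairwise independent vectors hit all $q+1$ lines'' is the paper's ``$q+1$ equivalence classes under scalar multiplication'') forces a zero in every row, the standard form then makes each trailing column a permutation of $\eff_q$, and summing the last $q$ rows yields the zero vector via $\sum_{a\in\eff_q}a=0$ for $q>2$. No gaps; the argument matches the paper's proof step for step.
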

\begin{proof}
Suppose $M$ is a matrix for a linear $(2,q+1,q)$-AONT defined over $\eff_q$.
We can assume that $M$ is in standard form.
Consider the $q+1$ ordered pairs occurring in any two fixed rows of the matrix $M$. 
There are $q$ symbols, which result in $q^2$ possible ordered pairs.
However, the pair consisting of two zeros is ruled out, leaving $q^2-1$ ordered pairs. 
For two such ordered pairs $(i,j)^T$ and $(i',j')^T$, define
$(i,j)^T \sim (i',j')^T$ if there is a nonzero element
$\alpha \in \eff_q$ such that $(i,j)^T = \alpha (i',j')^T$.
Clearly $\sim$ is an equivalence relation, and there are
$q+1$ equivalence classes, each having size $q-1$. 
We can only have at most one ordered pair from each equivalence class,
so there are only $q+1$ 
%
possible pairs that can occur. Since there are $q+1$ columns, it follows that
from each of these $q+1$ equivalence classes, exactly one will be chosen. 
Therefore, each row must contain exactly one $0$ and thus $M$ is of type $q+1$ standard form.

From the above discussion, we see that $M$ has the following structure:
\[
\left( \begin{array}{c c c c c c c c}
0 & 1 & 1 & 1 & \dots& 1& 1 \\
1 & 0 &  &  &  &  & \\
1 &  & 0 &  &  &  & \\
1 &  &  & 0 &  &  & \\
\vdots &  &  &  & \ddots&  & \\
1 &  &  &  &  & 0 & \\
1 &  &  &  &  &  &0
\end{array}\right) .
\]
Now consider the lower right $q$ by $q$ submatrix $M'$ of $M$.
There is exactly one occurrence of each element of ${\eff_q}^*$ in each column of $M'$. Now, compute the sum of all the rows in this matrix. Recall that the sum of the elements of a finite field $\eff_q$ is equal to $0$, provided that $q > 2$. Therefore, regardless of the configuration of the remaining entries, the sum of the last $q$ rows of $M$ is the all-zero vector. Therefore, the matrix $M$ is singular, which contradicts its being an AONT.
\end{proof}

\begin{Remark}
\label{nonexist-rem}
In \cite[Example 16]{DES}, it is shown that a linear $(2,3,2)$-AONT does not exist.
This covers the exception $q=2$ in Theorem \ref{nonexistence}.
\end{Remark}

\subsection{Linear $(2,q,q)$-AONT}
\label{2qq.sec}

We next obtain some structural conditions for linear $(2,q,q)$-AONT in standard form.

\begin{Lemma} Suppose $M$ is a matrix for a linear $(2,q,q)$-AONT in standard form.
Then $M$ is of type $q$ or type $q-1$.
\end{Lemma}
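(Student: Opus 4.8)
The plan is to show that at most one row of $M$ can be entirely free of zeros. Since every row and column of $M$ contains at most one zero (two zeros in a common row or column would give a $2$ by $2$ submatrix with a zero row or column, which is singular, contradicting Lemma \ref{linear}), the $\mu$ diagonal zeros in standard form account for \emph{all} the zeros of $M$, and clearly $\mu \leq q$. So it suffices to prove $\mu \geq q-1$, i.e., that at most one row is zero-free, and then $\mu \in \{q-1, q\}$ follows at once.

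First I would fix two arbitrary rows $i$ and $j$ and examine the $q$ ordered pairs $(m_{i,c}, m_{j,c})^T$, one per column $c$. Invertibility of the $2$ by $2$ submatrix on rows $i,j$ and columns $c, c'$ says exactly that $(m_{i,c}, m_{j,c})^T$ and $(m_{i,c'}, m_{j,c'})^T$ are linearly independent whenever $c \neq c'$; hence the $q$ column pairs lie in $q$ \emph{distinct} equivalence classes under the scaling relation $\sim$ introduced in the proof of Theorem \ref{nonexistence}. As noted there, the nonzero vectors of ${\eff_q}^2$ fall into $q+1$ such classes. Exactly two of these classes, represented by $(1,0)^T$ and $(0,1)^T$, consist of vectors with a zero coordinate, while the remaining $q-1$ classes consist of vectors with both coordinates nonzero.

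The crux, and the step I expect to carry the argument, is the case in which rows $i$ and $j$ both contain no zero. Then every entry $m_{i,c}$ and $m_{j,c}$ is nonzero, so each of the $q$ column pairs has both coordinates nonzero and therefore lies in one of only $q-1$ equivalence classes. Since $q > q-1$, the pigeonhole principle forces two of the pairs into the same class, contradicting the pairwise linear independence established above. Hence two zero-free rows cannot coexist, so at most one row of $M$ lacks a zero; this yields $\mu \geq q-1$, and with $\mu \leq q$ we conclude that $M$ is of type $q-1$ or type $q$. The subtlety to handle carefully is the bookkeeping of the two zero-coordinate classes $(1,0)^T$ and $(0,1)^T$, so that the pigeonhole count is run against precisely the $q-1$ fully-nonzero classes and not against all $q+1$.
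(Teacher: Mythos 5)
Your proof is correct and follows essentially the same route as the paper: assume two rows are zero-free, note that the $q$ column pairs in those rows must fall into distinct equivalence classes under scaling, and derive a contradiction since only $q-1$ of the $q+1$ classes avoid a zero coordinate. Your additional bookkeeping (at most one zero per row and column, so the diagonal zeros are all the zeros, and hence $\mu \geq q-1$ suffices) just makes explicit what the paper leaves implicit.
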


\begin{proof}
Suppose that $M$ is of type $\mu$ standard form, where $\mu \leq q-2$. Then the last two rows of $M$ contain 
no zeroes. We proceed as in the proof of Theorem \ref{nonexistence}. The $q$ ordered pairs in
the last two rows must all be from different equivalence classes. However, there are only $q-1$
equivalence classes that do not contain a $0$, so we have a contradiction.
\end{proof}

Therefore the standard form of a  linear $(2,q,q)$-AONT 
looks like \[ M= 
\left( \begin{array}{c c c c c c c c}
0 & 1 & 1 & 1 & \dots& 1& 1 \\
1 & 0 &  &  &  &  & \\
1 &  & 0 &  &  &  & \\
1 &  &  & 0 &  &  & \\
\vdots &  &  &  & \ddots&  & \\
1 &  &  &  &  & 0 & \\
1 &  &  &  &  &  & \chi
\end{array}\right) ,
\]
where $\chi=0$ iff $M$ is of type $q$ and $\chi\neq 0$ iff $M$ is of type $q-1$.

For the rest of this section, we will focus on linear $(2,q,q)$-AONT in type $q$ standard form.
Suppose $M$ is a matrix for such an AONT.
Define a linear ordering on the elements in the alphabet $\eff_q$.
If $M$ also has the additional property that the entries
in columns $3, \dots , q$  of row $2$ are in increasing order (with respect to this linear order), 
then we say that
$M$ is \emph{reduced}. So the term ``reduced'' means that $M$ is a linear $(2,q,q)$-AONT
that satisfies the following additional properties:
\begin{itemize}
\item the diagonal of $M$ consists of zeroes,
\item the remaining entries in the first row and first column of $M$ are ones, and
\item the entries
in columns $3, \dots , q$  of row $2$ of $M$ are in increasing order.
\end{itemize}

\begin{Lemma}
\label{reduced} Suppose $M$ is a matrix for a linear $(2,q,q)$-AONT in type $q$ standard form.
Then we can permute the rows and columns of $M$ to obtain a reduced matrix $M'$.
\end{Lemma}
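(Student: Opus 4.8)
The plan is to produce an explicit row-and-column permutation that sorts the relevant part of row $2$ while leaving the entire type $q$ standard form intact. Write $a_j = M_{2,j}$ for $3 \le j \le q$, so that row $2$ has the shape $(1,\, 0,\, a_3,\, a_4,\, \dots,\, a_q)$. The first fact I would record is that the $a_j$ are pairwise distinct. For $j \neq k$ with $j,k \ge 3$, the $2$ by $2$ submatrix of $M$ on rows $1,2$ and columns $j,k$ is $\left(\begin{smallmatrix} 1 & 1 \\ a_j & a_k \end{smallmatrix}\right)$, whose determinant is $a_k - a_j$; since every $2$ by $2$ submatrix of an AONT matrix is invertible by Lemma \ref{linear}, we get $a_j \neq a_k$. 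Hence $a_3, \dots, a_q$ are $q-2$ distinct elements of $\eff_q$, and there is a unique permutation $\pi$ of the index set $\{3, \dots, q\}$ that rearranges them into increasing order with respect to the fixed linear ordering on $\eff_q$.

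Next I would apply $\pi$ \emph{simultaneously} to the rows indexed $3, \dots, q$ and to the columns indexed $3, \dots, q$, leaving rows and columns $1$ and $2$ fixed, and call the result $M'$. The key point is that coupling the row and column permutations is exactly what preserves the zero diagonal: the zero at position $(j,j)$ with $j \ge 3$ is carried to $(\pi(j),\pi(j))$, again a diagonal position, while the zeros at $(1,1)$ and $(2,2)$ are untouched. Moreover the only zeros of a type $q$ standard form matrix already lie on the diagonal, since each row and column can contain at most one zero (two zeros in a row or column would yield a singular $2$ by $2$ submatrix), so $M'$ again has all of its zeros exactly on the diagonal.

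I would then verify the remaining standard-form constraints survive. Rows $1$ and $2$ and columns $1$ and $2$ are fixed by the permutation, so permuting rows $3,\dots,q$ merely reorders the ones already sitting in the first column, and permuting columns $3,\dots,q$ merely reorders the ones in the first row; thus the first row and first column of $M'$ still read $0$ followed by ones, and $M'$ is in type $q$ standard form. Row $2$ is fixed by the row permutation and has only its columns $3,\dots,q$ reordered by $\pi$, so by the choice of $\pi$ those entries now appear in increasing order. Finally, since row and column permutations carry each $2$ by $2$ submatrix of $M$ to a $2$ by $2$ submatrix of $M'$ and preserve invertibility of the whole matrix, $M'$ is again a linear $(2,q,q)$-AONT satisfying all three bullet conditions, hence reduced.

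The one genuinely delicate point, which I would flag as the main obstacle, is recognizing that the row and column permutations cannot be chosen independently: sorting row $2$ forces a column permutation, and that column permutation must be matched by the identical row permutation or the zero diagonal is destroyed. Once this coupling is imposed, the argument goes through precisely because row $2$ is fixed by the row permutation and so is not re-scrambled by the coupled move.
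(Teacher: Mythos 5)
Your proposal is correct and follows essentially the same route as the paper: find the unique permutation $\pi$ of $\{3,\dots,q\}$ sorting the entries of row $2$, and apply it simultaneously to the corresponding rows and columns so that the zero diagonal and the first row and column of ones are preserved. The extra details you supply (distinctness of the entries $a_3,\dots,a_q$ via the $2$ by $2$ submatrix on rows $1,2$, and the verification that the coupled permutation preserves all the standard-form and AONT properties) are left implicit in the paper's two-sentence proof but are entirely consistent with it.
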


\begin{proof}
Let $\pi$ be the permutation of $3, \dots , q$, which, when applied to the columns of
$M$, results in the entries
in columns $3, \dots , q$ of row $2$ being in increasing order. Call this matrix
$M^{\pi}$.
Now, apply the same permutation $\pi$ to the rows of $M^{\pi}$ to construct the desired
reduced matrix $M'$.
\end{proof}

\subsection{Some Computer Searches for Small Linear $(2,q,q)$-AONT}
\label{computer.sec}

We have performed exhaustive searches for reduced $(2,q,q)$-AONT (which are by definition
linear AONT in type $q$ standard form) 
for all prime powers $q \leq 11$. The results are found in Table \ref{tab1}.
(The notion of ``equivalence'' will be discussed in  Section \ref{equivalence.sec}.)

One perhaps surprising outcome of our computer searches is that there are 
no linear $(2,q,q)$-AONT in type $q$ standard form for $q = 8,9$
(however,  it is easy to find examples of linear $(2,q-1,q)$-AONT
for $q = 8,9$).
We also performed an exhaustive search for linear $(2,q,q)$-AONT in type $q-1$ standard form
for $q \leq 9$, and we did not find any examples. 

\begin{table}
\caption{Number of reduced and inequivalent linear $(2,q,q)$-AONT, for prime powers $q\le 11$}
\label{tab1}
\begin{center}
\begin{tabular}{|c|c|c|}\hline
$q$ & reduced $(2,q,q)$-AONT & inequivalent $(2,q,q)$-AONT\\ \hline
$3$ & $2$ & $1$ \\ \hline
$4$ & $3$ & $2$\\ \hline
$5$ & $38$ &  $5$\\ \hline
$7$ & $13$  & $1$ \\ \hline
$8$ & $0$  & $0$ \\ \hline
$9$ & $0$  & $0$ \\ \hline
$11$ & $21$  & $1$\\ \hline
\end{tabular}
\end{center}
\end{table}

For the prime orders $3,5,7,11$, it turns out that there
exists a reduced  $(2,q,q)$-AONT having a very interesting structure, which we define now.
Let $M$ be a matrix for a  reduced  $(2,q,q)$-AONT.
Let $\tau \in \eff_q$. We say that $M$ is \emph{$\tau$-skew-symmetric} if,
for any pair of cells $(i,j)$ and $(j,i)$ of $M$, where $2 \leq i,j \leq q$ and $i \neq j$,
it holds that $m_{ij} + m_{ji} = \tau$. Notice that this property implies that the matrix
$M$ contains no entries equal to $\tau$, since the only zero entries are on the diagonal.
Another way to define the $\tau$-skew-symmetric property is to say that
$M_1 + {M_1}^T = \tau (J-I)$, where $M_1$ is formed from $M$ by deleting the first row and column,
$J$ is the all-ones matrix and $I$ is the identity matrix.

Our computer searches show that there is a $(q-1)$-skew-symmetric reduced  $(2,q,q)$-AONT for 
$q= 3,5,7,11$, as well as $\tau$-skew-symmetric examples with
various other values of $\tau$.

\begin{Example}
\label{E1}
A $2$-skew-symmetric reduced linear $(2,3,3)$-AONT:
\[
\left(
\begin{array}{ccc}
0 & 1 & 1 \\
1 & 0 & 1\\
1 & 1 & 0
\end{array}
\right) .
\]
\end{Example}

\begin{Example}
\label{E2}
A linear $(2,4,4)$-AONT, defined over the finite field
$\eff_4 = \zed_2[x] / (x^2 + x + 1)$:
\[
\left(
\begin{array}{cccc}
0 & 1 & 1 & 1\\
1 & 0 & 1 & x\\
1 & x & 0 & x+1\\
1 & 1 & x & 0
\end{array}
\right) .
\]
\end{Example}

\begin{Example}
\label{E3}
A $4$-skew-symmetric reduced linear $(2,5,5)$-AONT:
\[
  \left(\begin{array}{c c c c c} 
 0 & 1 & 1 & 1 & 1\\
 1 & 0 & 1 & 2 & 3\\
 1 & 3 & 0 & 1 & 2\\
 1 & 2 & 3 & 0 & 1\\
 1 & 1 & 2 & 3 & 0
 \end{array}\right)	
  \]
 \end{Example}

\begin{Example}
\label{E4}
A $6$-skew-symmetric reduced linear $(2,7,7)$-AONT:
\[
\left(
\begin{array}{ccccccc}
 0 & 1 & 1 & 1 & 1 & 1 & 1\\
 1 & 0 & 1 & 2 & 3 & 4 & 5\\
 1 & 5 & 0 & 3 & 4 & 2 & 1\\
 1 & 4 & 3 & 0 & 5 & 1 & 2\\
 1 & 3 & 2 & 1 & 0 & 5 & 4\\
 1 & 2 & 4 & 5 & 1 & 0 & 3\\
 1 & 1 & 5 & 4 & 2 & 3 & 0
\end{array}
\right) .
\]
\end{Example}

\begin{Example}\label{289}
A linear $(2,8,9)$-AONT, defined over the finite field
$\eff_9  = \zed_3[x] / (x^2 + 1)$:
\[
\left( \begin{array}{c c c c c c c c}
0 & 1 & 1 & 1 & 1 & 1 & 1 & 1 \\
1 & 0 & 1 & 2 & x & x+1 & x+2 & 2x \\
1 & 1 & 0 & 2x+1 & x+1 & x+2 & 2 & x \\
1 & 2x & x & 0 & x+2 & 2 & 2x+1 & x+1  \\
1 & x+2 & 2 & x & 0 & 1 & 2x & 2x+1 \\
1 & x+1 & x+2 & 2x & 2x+1 & 0 & 1 & 2 \\
1 & x & x+1 & 1 & 2 & 2x+1 & 0 & x+2 \\
1 & 2 & 2x+1 & x+1 & 1 & 2x & x & 0 
\end{array} \right)
\]
\end{Example}

\begin{Example}
\label{E5}
A $10$-skew-symmetric reduced  linear $(2,11,11)$-AONT:
\[
\left(\begin{array}{c c c c c c c c c c c}
 0 & 1 & 1 & 1 & 1 & 1 & 1 & 1 & 1 & 1 & 1\\
 1 & 0 & 1 & 2 & 3 & 4 & 5 & 6 & 7 & 8 & 9\\
 1 & 9 & 0 & 7 & 8 & 1 & 3 & 2 & 5 & 4 & 6\\
 1 & 8 & 3 & 0 & 2 & 5 & 6 & 1 & 9 & 7 & 4\\
 1 & 7 & 2 & 8 & 0 & 6 & 1 & 3 & 4 & 9 & 5\\
 1 & 6 & 9 & 5 & 4 & 0 & 8 & 7 & 3 & 1 & 2\\
 1 & 5 & 7 & 4 & 9 & 2 & 0 & 8 & 1 & 6 & 3\\
 1 & 4 & 8 & 9 & 7 & 3 & 2 & 0 & 6 & 5 & 1\\
 1 & 3 & 5 & 1 & 6 & 7 & 9 & 4 & 0 & 2 & 8\\
 1 & 2 & 6 & 3 & 1 & 9 & 4 & 5 & 8 & 0 & 7\\
 1 & 1 & 4 & 6 & 5 & 8 & 7 & 9 & 2 & 3 & 0
\end{array}\right)
\]

\end{Example}

\subsection{Equivalence of Linear AONT}
\label{equivalence.sec}

In this section, we discuss how to determine if
two linear AONT are ``equivalent''. We define this notion as follows.
Suppose $M$ and $M'$ are   linear $(t,s,q)$-AONT.
We say that $M$ and $M'$ are \emph{equivalent} if $M$ can be transformed
into $M'$ by performing a sequence of operations of the following type:
\begin{itemize}
\item row and column permutations,
\item multiplying a row or column by a nonzero constant, and 
\item transposing the matrix.
\end{itemize}

Here, we confine our attention to reduced $(2,q,q)$-AONT, as defined in
Section \ref{2qq.sec}. We already showed that any linear $(2,q,q)$-AONT
of type $q$ standard form is equivalent to a reduced $(2,q,q)$-AONT.
But it is possible that two reduced $(2,q,q)$-AONT could be equivalent.
We next describe a simple process to test for equivalence of reduced $(2,q,q)$-AONT.

The idea is to start with a specific reduced $(2,q,q)$-AONT, say $M$.
Given $M$, we can generate all the reduced $(2,q,q)$-AONT that are equivalent to $M$.
After doing this, it is a simple matter to take any other reduced $(2,q,q)$-AONT, say $M'$
and see if it occurs in the list of reduced $(2,q,q)$-AONT that are equivalent to $M$.

The algorithm presented in Figure \ref{equivalent.fig} generates all the 
reduced $(2,q,q)$-AONT that are equivalent to $M$. 
After executing the first five steps, we have a list of $q^2-q$ reduced $(2,q,q)$-AONT,
each of which is equivalent to
$M$ (this includes $M$ itself). After transposing the original matrix, we repeat the same five
steps, which  gives $q^2 - q$ additional
equivalent AONT. The result is a list of $2q^2 - 2q$ equivalent AONT, but of
course there could be duplications in the list.

\begin{figure}
\begin{center}
\framebox{
\begin{minipage}[c][2.25in][c]{5.5in} 
\begin{enumerate}
\item Pick two distinct rows $r_1,r_2$. Interchange rows $1$ and $r_1$ of $M$
and interchange rows $2$ and $r_2$ of $M$. Then
interchange columns $1$ and $r_1$ and interchange columns $2$ and $r_2$ of the resulting matrix.
\item Multiply columns $2, \dots , q$ by constants to get $(0 \: 1 \:1 \: \cdots \: 1)$ in the first row.
\item Multiply rows $2, \dots , q$ by constants to get  $(0 \: 1 \:1 \: \cdots \: 1)^T$ in the first column.
\item Permute columns $3, \dots , q$ so the entries in row $2$ in these columns
are in increasing order (there is a unique permutation $\pi$ that does this).
\item Apply the same permutation $\pi$ to rows $3, \dots , q$.
\item Transpose $M$ and apply the first five steps to the transposed matrix.
\end{enumerate}
\end{minipage}
}
\end{center}
\caption{Generating the reduced $(2,q,q)$-AONT that are equivalent to a given reduced $(2,q,q)$-AONT, $M$}
\label{equivalent.fig}
\end{figure}

We have used this algorithm to determine the number of inequivalent $(2,q,q)$-AONT
for prime powers $q \leq 11$. We started with all the reduced  $(2,q,q)$-AONT and then
we eliminated equivalent matrices using our algorithm as described above.
The results are presented in Table \ref{tab1}.
 

\subsection{Additional Results on Linear AONT}
\label{lin-gen}

\begin{Theorem}
\label{cofactor}
If there exists a linear $(t,s,q)$-AONT with $t < s$,
then there exists a linear $(t,s-1,q)$-AONT.
\end{Theorem}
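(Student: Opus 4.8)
The plan is to invoke Lemma \ref{linear} to convert the statement into a purely matrix-theoretic claim, and then produce the smaller AONT by deleting a single row and a single column from the given matrix. Suppose $M$ is an invertible $s$ by $s$ matrix defining a linear $(t,s,q)$-AONT; by Lemma \ref{linear}, every $t$ by $t$ submatrix of $M$ is invertible. Since the hypothesis is $t < s$, we have $t \leq s-1$, so a linear $(t,s-1,q)$-AONT is a meaningful object, and by Lemma \ref{linear} it is exactly an invertible $(s-1)$ by $(s-1)$ matrix all of whose $t$ by $t$ submatrices are invertible.

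First I would record the easy observation that the property ``every $t$ by $t$ submatrix is invertible'' is automatically inherited by submatrices of $M$: if $N$ is obtained from $M$ by deleting rows and columns, then any $t$ by $t$ submatrix of $N$ is also a $t$ by $t$ submatrix of $M$, hence invertible. Consequently, any $(s-1)$ by $(s-1)$ submatrix of $M$ already satisfies the second requirement of Lemma \ref{linear}, and the only remaining task is to arrange that such a submatrix is itself invertible.

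The substantive step, which is where the cofactor structure enters as the theorem's label suggests, is to exhibit an invertible $(s-1)$ by $(s-1)$ submatrix of $M$ obtained by removing exactly one row and one column. Since $M$ is invertible, $\det(M) \neq 0$, and therefore the adjugate satisfies $\mathrm{adj}(M) = \det(M)\, M^{-1} \neq 0$. The entries of $\mathrm{adj}(M)$ are, up to sign, precisely the $(s-1)$ by $(s-1)$ minors of $M$, namely the cofactors $C_{ij} = (-1)^{i+j}\det(M_{ij})$, where $M_{ij}$ denotes $M$ with row $i$ and column $j$ deleted. Because $\mathrm{adj}(M)$ is not the zero matrix, some cofactor $C_{ij}$ is nonzero, and hence the corresponding submatrix $M_{ij}$ is an invertible $(s-1)$ by $(s-1)$ matrix.

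To conclude, I would set $M' = M_{ij}$ for such a choice of $i,j$. By the inheritance observation every $t$ by $t$ submatrix of $M'$ is invertible, and $M'$ itself is invertible, so Lemma \ref{linear} shows that $M'$ defines a linear $(t,s-1,q)$-AONT. I do not expect a genuine obstacle: the inheritance of the submatrix condition is immediate, and the existence of a nonzero $(s-1)$-minor follows either from the nonvanishing of the adjugate or, equivalently, from a single Laplace expansion of $\det(M)$ along any row, which cannot vanish unless some cofactor is nonzero. The only point needing care is the bookkeeping check $t \leq s-1$ ensuring the target object is well defined, which is exactly the hypothesis $t < s$.
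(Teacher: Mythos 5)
Your proposal is correct and takes essentially the same approach as the paper: the paper also finds an invertible $(s-1)$ by $(s-1)$ submatrix by deleting one row and one column, arguing via the cofactor expansion of $\det(M)$ along the first column that some such minor must be nonzero. Your additional remarks (the inheritance of the $t$ by $t$ submatrix condition and the check $t \leq s-1$) are details the paper leaves implicit, but the core argument is identical.
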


\begin{proof}
Let $M$ be a matrix for a
linear $(t,s,q)$-AONT.
Consider all the $s$ possible $s-1$ by $s-1$ submatrices formed
by deleting the first column and a row of $m$. We claim that at
least one of these $s$ matrices is invertible. For, if they were 
all noninvertible, then $M$ would be noninvertible, by considering
the cofactor expansion with respect the first column of $M$. 
\end{proof}

Given a prime power $q$,
define \[\mathcal{S}(q) = \{ s: \text{there exists a linear } (2,s,q)\text{-AONT}  \}.\]
From Remark \ref{ssq.rem}, we have that $2 \in \mathcal{S}(q)$, so $\mathcal{S}(q) \neq \emptyset$.
Also, from Theorem \ref{nonexistence}, Remark \ref{nonexist-rem}
and Theorem \ref{cofactor}, there exists a maximum element in $\mathcal{S}(q)$,
which we will denote by $M(q)$.
%
In view of Theorem \ref{cofactor}, we know that 
a linear $(2,s,q)$-AONT exists for all $s$ such that $2 \leq s \leq M(q)$.
We summarize  upper and lower bounds on $M(q)$  in Table \ref{tab2}.

\begin{table}[tb]
\caption{Upper and Lower bounds on $M(q)$}
\label{tab2}
\begin{center}
\begin{tabular}{c|c}
bound & authority \\ \hline
$\lfloor q/2 \rfloor \leq M(q) \leq q$ for all prime powers $q$ & Theorem \ref{cauchy} and \ref{nonexistence} \\
$M(q) \geq q-1$ if $q = 2^n$ and $q-1$ is prime &  Theorem \ref{vandermonde}\\ \hline
$M(q) = q$ for $q = 3,4,5,7,11$ & Examples \ref{E1}--\ref{E4} and \ref{E5}\\
$M(8) \geq 7$ & Theorem \ref{vandermonde} \\
$M(9) \geq 8$ & Example \ref{289}
\end{tabular}
\end{center}
\end{table}

\bigskip 

We finish this section by showing that the existence of linear AONT imply the existence of certain 
linear resilient functions.  
We  present the definition of resilient functions given in \cite{GS}. 
Let $|X| =v$. An \emph{$(n,m,t,v)$-resilient function}
is a function $g : X^n \rightarrow X^m$ which has the property that, if any $t$ of the $n$ input values 
are fixed and the remaining $n-t$ input values are chosen independently and uniformly at random,
then every output $m$-tuple occurs with the same probability $1/v^m$.

Suppose $q$ is a prime power. A $(n,m,t,q)$-resilient function $f$ is \emph{linear} if 
$f(x) = xM^T$ for some $m$ by $n$ matrix $M$ defined over $\eff_q$.

\begin{Theorem}
\label{RF-code}
Suppose there is a linear $(t,s,q)$-AONT. Then there is a linear $(s,s-t,t,q)$-resilient function.
\end{Theorem}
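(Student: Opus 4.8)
The plan is to build the resilient function directly from the matrix characterization of linear AONT in Lemma~\ref{linear}. By that lemma, a linear $(t,s,q)$-AONT is an invertible $s$ by $s$ matrix $M$ over $\eff_q$ in which every $t$ by $t$ submatrix is invertible. I would let $H$ be the $t$ by $s$ matrix formed by the first $t$ rows of $M$. Then any set of $t$ columns of $H$ is linearly independent (such a choice is precisely a $t$ by $t$ submatrix of $M$, which is invertible), so $H$ has rank $t$ and its null space $C = \{ c \in \eff_q^{s} : H c^{T} = 0 \}$ is a subspace of dimension $s-t$. This $C$ (concretely, it is spanned by the last $s-t$ columns of $M^{-1}$, since those columns are orthogonal to the first $t$ rows of $M$) is what I will use to define the function.

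The first substantive step is to show that every nonzero $c \in C$ has at least $t+1$ nonzero coordinates. Indeed, $H c^{T} = 0$ expresses a linear dependence among the columns of $H$ indexed by the support of $c$; since every $t$ columns of $H$ are independent, any such dependence must involve at least $t+1$ columns, so $c$ has Hamming weight at least $t+1$. I would then let $G$ be any $(s-t)$ by $s$ matrix whose rows form a basis of $C$, and define $f : \eff_q^{s} \to \eff_q^{s-t}$ by $f(x) = x G^{T}$; this is the proposed linear $(s, s-t, t, q)$-resilient function, in the matrix form required by the definition.

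It then remains to verify the resilience property from the definition itself. Fix any set $D$ of $t$ input coordinates and split $G = [\,G_D \mid G_{\bar D}\,]$ into its columns in $D$ and its remaining $s-t$ columns, so that $f(x) = x_D G_D^{T} + x_{\bar D} G_{\bar D}^{T}$. The central claim is that the $(s-t)$ by $(s-t)$ matrix $G_{\bar D}$ is invertible: if $a G_{\bar D} = 0$ for some nonzero row vector $a$, then $aG$ is a nonzero element of $C$ (the rows of $G$ are independent) whose support lies in $D$, hence of weight at most $t$, contradicting the weight bound just proved. Granting that $G_{\bar D}$ is invertible, for any fixed values of the coordinates $x_D$ the map $x_{\bar D} \mapsto x_{\bar D} G_{\bar D}^{T}$ is a bijection of $\eff_q^{s-t}$, so as $x_{\bar D}$ ranges uniformly the output $f(x)$ is uniform over $\eff_q^{s-t}$. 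Every output $(s-t)$-tuple therefore occurs with probability $1/q^{s-t}$, which is exactly the requirement in the definition of an $(s, s-t, t, q)$-resilient function.

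The routine parts are the rank and dimension count for $C$ and the final uniform-distribution computation. The part carrying the real content is translating the information-theoretic resilience definition into the linear-algebra statement that deleting any $t$ columns of $G$ leaves an invertible matrix, and then certifying this through the minimum-weight bound on $C$. I expect that weight argument---equivalently, the observation that $H$ is a parity-check matrix of an MDS code $C$ of minimum distance $t+1$---to be the main conceptual step, with everything else amounting to bookkeeping.
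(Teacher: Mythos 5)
Your proof is correct and follows essentially the same route as the paper: extract $t$ rows of $M$, observe that any $t$ columns of the resulting $t \times s$ matrix are linearly independent, and take a generator matrix of the dual code (your null space $C$) as the matrix of the resilient function. The only difference is that the paper cites \cite{StMa} for the final step---that a generator matrix of a code with minimum distance at least $t+1$ gives a $t$-resilient function---whereas you prove that fact directly via the invertibility of $G_{\bar D}$, which is a reasonable self-contained substitute rather than a genuinely different argument.
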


\begin{proof}
Suppose that the $s$ by $s$ matrix $M$ over $\eff_q$ gives rise to a linear $(t,s,q)$-AONT.
Then, from Lemma \ref{linear}, every $t$ by $t$ submatrix of $M$ is invertible. Construct an
$s$ by $t$ matrix $M^*$ by deleting any $s-t$ rows of $M$. Clearly any $t$
columns of $M^*$ are linearly independent. Let $\mathcal{C}$ be the code generated 
by the rows of $M^*$ and let $\mathcal{C}'$ be the dual code (i.e., the orthogonal
complement of $\mathcal{C}$). It is well-known from basic coding theory 
(e.g., see \cite[Chapter 1, Theorem 10]{MacSl})
that the minimum distance of $\mathcal{C}'$ is at least $t+1$. Let $N$ be a generating
matrix for $\mathcal{C}'$. Then $N$ is an $s-t$ by $s$ matrix over $\eff_q$. 
Since $N$ generates a code having minimum distance at least $t+1$, the function
$f(x) = xN^T$ is a a (linear) $(s,s-t,t,q)$-resilient function 
(for a short proof of this fact, see \cite[Theorem 1]{StMa}).
\end{proof}

\section{New Results on General AONT}
\label{general.sec}

In this section, we present a few results on ``general'' AONT 
(i.e., results that hold for any AONT, linear or not).

\begin{Theorem}
\label{AONT-OA}
Suppose there is a $(t,s,v)$-AONT. Then there is an OA$(t,s,v)$.
\end{Theorem}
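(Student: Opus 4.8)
The plan is to realize the desired orthogonal array as a subarray of rows of the large array guaranteed by Theorem~\ref{equiv}, restricted to the input columns.

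First I would invoke Theorem~\ref{equiv} to replace the hypothesized $(t,s,v)$-AONT by a $(v^s,2s,v)$-array $A$, whose columns $\{1,\dots,s\}$ are the ``input'' columns and whose columns $\{s+1,\dots,2s\}$ are the ``output'' columns, and which is unbiased with respect to every column set of the form $I \cup (\{s+1,\dots,2s\}\setminus J)$ with $|I| = |J| = t$. Next I would fix the $t$ ``distinguished'' output columns by taking $J = \{s+1,\dots,s+t\}$, so that $\{s+1,\dots,2s\}\setminus J = \{s+t+1,\dots,2s\}$ is a fixed block of $s-t$ output columns. I would then form the subarray $B$ consisting of exactly those rows of $A$ in which these $s-t$ output columns take some prescribed constant values (say, all zero), and discard the output columns, keeping only the $s$ input columns.

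The heart of the argument is a counting step. For any $t$-subset $I$ of the input columns, the unbiasedness of $A$ with respect to $I \cup \{s+t+1,\dots,2s\}$ --- a set of exactly $s$ columns --- says that each $s$-tuple over these columns occurs exactly $v^s/v^s = 1$ time. Restricting to the rows in which the $s-t$ chosen output columns match the prescribed values therefore makes every $t$-tuple in columns $I$ appear exactly once; taking $I$ to be any single fixed $t$-subset also shows that $B$ has exactly $v^t$ rows. Hence $B$ is a $(v^t,s,v)$-array in which every $t$ columns are unbiased with index $1$, which is precisely an OA$(t,s,v)$.

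The step I expect to be the crux --- and the only place the full AONT hypothesis is really used --- is obtaining index exactly $1$ rather than some larger $\lambda$. This is exactly what the ``exactly-once'' unbiasedness of $A$ on the $s$-column sets $I \cup (\{s+1,\dots,2s\}\setminus J)$ provides; equivalently, in the entropy language of (\ref{t-AONT.eq}), fixing the $s-t$ outputs leaves the $t$ inputs uniformly distributed, so each of the $v^t$ tuples occurs in exactly one of the $v^t$ selected rows. I would double-check consistency by confirming that the selected row count is $v^s/v^{s-t} = v^t$, matching the definition of an OA$(t,s,v)$. A more self-contained alternative, avoiding Theorem~\ref{equiv}, would argue directly from the bijection $\phi$ with uniform inputs together with (\ref{t-AONT.eq}), but the array formulation keeps the bookkeeping cleanest.
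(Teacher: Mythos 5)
Your proof is correct and follows essentially the same route as the paper: fix an $(s-t)$-tuple in $s-t$ of the output columns of the $(v^s,2s,v)$-array from Theorem~\ref{equiv}, restrict to the matching rows, and project onto the $s$ input columns to obtain the OA$(t,s,v)$. Your version simply makes explicit the unbiasedness count ($v^s/v^s=1$) that the paper states more informally.
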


\begin{proof}
Suppose we represent an $(t,s,v)$-AONT by a $(v^s,2s,v)$-array denoted by $A$.
Let $R$ denote the rows of $A$ that contain a fixed $(s-t)$-tuple in the last
$s-t$ columns of $A$. Then $|R| = v^t$. Delete all the rows of $A$ not in $R$ and
delete the last $s$ columns of $A$ and call the resulting array $A'$. Within 
any $t$ columns of $A$, we see that every $t$-tuple of symbols occurs exactly once,
since the rows of $A'$ are determined by fixing $s-t$ outputs of the AONT.
But this says that $A'$ is an OA$(t,s,v)$.
\end{proof}

The following classical bound can be found in \cite{CD}.
\begin{Theorem}[Bush Bound]
If there is an OA$(t,s,v)$, then
\[ s \leq 
\begin{cases}
v+t-1 & \text{if $t=2$, or if $v$ is even and $3 \leq t \leq v$}\\
v+t-2 & \text{if $v$ is odd and $3 \leq t \leq v$}\\
t+1 & \text{if $t \geq v$.}
\end{cases}
\]
\end{Theorem}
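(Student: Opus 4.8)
The plan is to prove each of the three cases by the same underlying counting/algebraic strategy, fixing columns of the orthogonal array and tracking how many symbol-configurations remain available. Throughout I will use Theorem \ref{equiv}'s notion of an unbiased array specialized to orthogonal arrays: an OA$(t,s,v)$ is a $(v^t,s,v)$-array in which every $t$-subset of columns contains each $t$-tuple over the alphabet exactly once. The overarching idea is to fix the entries in $t-1$ of the columns to a constant and examine the subarray of rows that agree there; because the array is unbiased on any $t$ columns, in this subarray each of the remaining columns becomes, in effect, a fresh ``block'' on which strong uniformity constraints hold. This reduction is the engine for all three cases.

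First I would dispatch the case $t=2$, which is the case this paper actually needs. Here the claim $s \le v+1$ follows by a direct pairwise-difference argument on the columns, essentially the same combinatorial counting already used in the proof of Theorem \ref{nonexistence}: fix two columns and observe that among the $v^2$ rows, the $s-2$ remaining columns impose the constraint that no two columns can repeat an ordered configuration, forcing $s-2 \le v-1$. I would then handle the general range $3 \le t \le v$ by induction on $t$, using the reduction above: fixing one column to a constant symbol collapses an OA$(t,s,v)$ into an OA$(t-1,s-1,v)$ on the remaining columns restricted to the $v^{t-1}$ rows that agree in the fixed column. The even/odd split in the bound ($v+t-1$ versus $v+t-2$) is exactly where the characteristic-two phenomenon from earlier in the paper reappears: summing entries across a column yields $0$ automatically when $v$ is even (cf.\ the remark that $\sum_{a \in \eff_q} a = 0$ for $q>2$), whereas for odd $v$ an extra parity obstruction costs one further column. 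I would track this sum-of-symbols invariant carefully through the induction.

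Finally, the case $t \ge v$ giving $s \le t+1$ is of a different flavor: once the strength $t$ meets or exceeds the alphabet size, the array is so heavily constrained that at most $t+1$ columns can coexist, and I would argue this by showing any $t+2$ columns would force a repeated $t$-tuple, contradicting the index-one unbiasedness. The main obstacle I anticipate is making the even-versus-odd distinction rigorous rather than heuristic: the parity gap of one column is genuinely delicate and is the crux of the original Bush result. Establishing that extra unit for odd $v$ cleanly — rather than just the weaker uniform bound $s \le v+t-1$ — is where the real work lies, and I would be prepared to invoke the classical treatment in \cite{CD} for the detailed parity argument rather than reconstruct it from scratch, since the statement is cited as a known classical bound.
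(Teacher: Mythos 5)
The first thing to note is that the paper does not prove this statement at all: it is quoted as a classical theorem of Bush and attributed to \cite{CD}, so there is no in-paper proof to compare yours against. Judged on its own terms, your proposal is a sketch with genuine gaps rather than a proof. The derived-array induction you describe (fixing a symbol in one column to pass from an OA$(t,s,v)$ to an OA$(t-1,s-1,v)$ on the surviving $v^{t-1}$ rows) is sound, but it can only ever yield $s \le v+t-1$ in the range $3 \le t \le v$: starting from $s \le v+1$ at $t=2$ and gaining one column per step reproduces the even-$v$ bound for \emph{all} $v$ and says nothing about the sharper bound $v+t-2$ for odd $v$. Your proposed mechanism for that improvement --- a ``sum-of-symbols invariant'' modelled on the paper's remark that the elements of a finite field sum to zero --- cannot work: that identity holds for every $q>2$ regardless of parity (it fails only at $q=2$), so it cannot be the source of an even/odd dichotomy, and in any case the alphabet of a general orthogonal array carries no field structure, since $v$ need not be a prime power. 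Bush's actual argument for odd $v$ is a separate combinatorial parity argument on a normalized derived array, and you explicitly defer it to \cite{CD}.

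The other two cases are also not established as written. For $t=2$, the assertion that ``no two columns can repeat an ordered configuration, forcing $s-2\le v-1$'' is not a recognizable property of an OA$(2,s,v)$ (it echoes the equivalence-class argument of Theorem \ref{nonexistence}, but that argument lives in a vector space over $\eff_q$ and does not transfer). The standard elementary proof normalizes symbols within each column so that the all-zero row appears, observes that no other row can contain two zeros (the pair $(0,0)$ already occurs in the all-zero row for every pair of columns, and the index is one), and counts zero entries: $sv \le s + (v^2-1)$, whence $s \le v+1$. For $t\ge v$ you state the conclusion (``any $t+2$ columns would force a repeated $t$-tuple'') without an argument. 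Since the paper itself simply cites the handbook, deferring to \cite{CD} wholesale would match its treatment; but as a proof attempt, the decisive odd-$v$ case and the $t\ge v$ case are missing, and the characteristic-two heuristic you propose for the parity gap would lead you astray if you tried to carry it out.
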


\begin{Corollary}
\label{bound2}
If there is a $(2,s,v)$-AONT, then $s \leq v+1$.
\end{Corollary}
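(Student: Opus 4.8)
The plan is to chain together two results that appear immediately before this corollary. The statement is about AONT (a combinatorial transform), but Theorem~\ref{AONT-OA} converts the AONT hypothesis into a statement about orthogonal arrays, and the Bush Bound then controls the parameters of such an array. So the whole argument should be a short deduction rather than anything requiring fresh machinery.

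First I would invoke Theorem~\ref{AONT-OA} with $t=2$: the hypothesis gives a $(2,s,v)$-AONT, and the theorem immediately produces an OA$(2,s,v)$. Next I would apply the Bush Bound to this orthogonal array. Since here $t=2$, I fall squarely into the first case of the bound's case analysis, namely $s \leq v+t-1$, which with $t=2$ reads $s \leq v+1$. That is exactly the claimed inequality, so the proof consists essentially of those two invocations.

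The one point I would check carefully is that the case hypotheses of the Bush Bound are genuinely met. The first case is stated as ``$t=2$, \emph{or} if $v$ is even and $3 \leq t \leq v$'', so the clause $t=2$ applies unconditionally for any $v$, with no parity or range restriction on $v$. I should make sure the corollary is not silently assuming $v \geq 2$ or a prime power; the Bush Bound as quoted is purely about orthogonal arrays over an alphabet of size $v$ and carries no such hypothesis, and Theorem~\ref{AONT-OA} likewise holds for general (linear or nonlinear) AONT over an arbitrary alphabet of size $v$. So the bound $s \leq v+1$ holds for every $v$ for which a $(2,s,v)$-AONT can be posed.

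I do not anticipate a real obstacle here, since both ingredients are already established in the excerpt and the case selection in the Bush Bound is immediate. The only thing that could go wrong is a misreading of the case split, so the single substantive step is confirming that the $t=2$ branch yields $v+t-1 = v+1$ with no side conditions. Thus the proof is a one-line corollary: combine Theorem~\ref{AONT-OA} and the Bush Bound, specializing to $t=2$.
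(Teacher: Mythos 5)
Your proposal is correct and is exactly the paper's (implicit) argument: the corollary is stated right after Theorem~\ref{AONT-OA} and the Bush Bound precisely so that one applies the former to obtain an OA$(2,s,v)$ and then reads off $s \leq v+t-1 = v+1$ from the $t=2$ case of the latter. Your check that the $t=2$ branch of the Bush Bound carries no side conditions on $v$ is the right (and only) point of care.
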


We recall that we proved in Theorem \ref{nonexistence} that $s \leq v$ if a linear $(2,s,v)$-AONT exists;
the above corollary establishes a slightly weaker result in a more general setting.

\begin{Corollary}
\label{bound3}
If there is a $(3,s,v)$-AONT, then $s \leq v+2$ if $v \geq 4$ is even,
and $s \leq v+1$ if $v \geq 3$ is odd.
\end{Corollary}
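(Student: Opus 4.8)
The plan is to reduce the statement to the Bush Bound by passing through the AONT--orthogonal array correspondence. First I would apply Theorem~\ref{AONT-OA} with $t = 3$: a $(3,s,v)$-AONT immediately produces an OA$(3,s,v)$. Once such an orthogonal array of strength $3$ with $s$ columns over an alphabet of size $v$ is in hand, bounding $s$ becomes a pure statement about orthogonal arrays, to which the Bush Bound applies directly with $t = 3$.

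The substance of the argument is then just the case split that the Bush Bound prescribes at $t = 3$. The even-$v$ clause gives $s \leq v + t - 1 = v + 2$ whenever $3 \leq t \leq v$, and the odd-$v$ clause gives $s \leq v + t - 2 = v + 1$ under the same range condition. For $v \geq 4$ even we have $3 \leq 3 \leq v$, so the first clause yields $s \leq v + 2$; for $v \geq 5$ odd we have $3 \leq 3 < v$, so the second clause yields $s \leq v + 1$. These are precisely the two bounds asserted.

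Since this is a direct corollary of the two preceding results, there is no deep obstacle; the one place to be careful is the boundary value $v = 3$ in the odd case, where $t = v$ so that the third clause ($t \geq v$) of the Bush Bound also applies. Here I would simply verify that the two applicable clauses agree — the odd-$v$ clause gives $v + t - 2 = 4$ and the $t \geq v$ clause gives $t + 1 = 4$, both equal to $v + 1$ — so no separate treatment of $v = 3$ is required and the corollary follows at once.
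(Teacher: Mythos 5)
Your proposal is correct and follows exactly the route the paper intends: Theorem~\ref{AONT-OA} converts the $(3,s,v)$-AONT into an OA$(3,s,v)$, and the Bush Bound with $t=3$ gives the two stated inequalities (the paper leaves this implicit as an immediate corollary). Your check that the $v=3$ boundary case is consistent across the overlapping clauses of the Bush Bound is a nice extra verification but changes nothing.
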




Lastly, we prove a generalization of Theorem \ref{RF-code} which shows that any AONT (linear or nonlinear) 
gives rise to a resilient function. This result is based on a characterization of 
resilient functions which says that they are equivalent to ``large sets'' of orthogonal arrays.
Suppose $\lambda = v^r$ for some integer $r$. A \emph{large set of OA$_{v^{r}}(t,n,v)$}
consists of $v^{n-r-t}$ distinct OA$_{v^{r}}(t,n,v)$, which together contain all $v^n$ possible
 $n$-tuples exactly once. 

We will make use of the following result of Stinson \cite{St-LOA}.

\begin{Theorem}
\cite[Theorem 2.1]{St-LOA}
\label{RF-LOA}
An $(n,m,t,v)$-resilient function is equivalent to a large set of OA$_{q^{n-m-t}}(t,n,v)$.
\end{Theorem}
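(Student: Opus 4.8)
The plan is to prove both directions of the claimed equivalence by routing through the preimages of the resilient function, with the orthogonal-array index playing the role of the resilient probability. Throughout I set $r = n-m-t$, so that the definition quoted just above requires a large set of $v^{n-r-t} = v^m$ arrays OA$_{v^{r}}(t,n,v)$, which I will index naturally by the output $m$-tuples in $X^m$; implicitly this needs $n \geq m+t$ so that $r \geq 0$.

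First I would reformulate the resilient condition as a counting statement. Let $g : X^n \rightarrow X^m$ be an $(n,m,t,v)$-resilient function, fix a set $T$ of $t$ input coordinates, an assignment $\mathbf{a} \in X^t$ to those coordinates, and an output $\mathbf{w} \in X^m$. Exactly $v^{n-t}$ inputs agree with $\mathbf{a}$ on $T$, and by resilience a uniformly chosen such input maps to $\mathbf{w}$ with probability $1/v^m$; hence the number of $\mathbf{x}$ with $\mathbf{x}|_T = \mathbf{a}$ and $g(\mathbf{x}) = \mathbf{w}$ equals $v^{n-t}/v^m = v^{r}$, independently of $T$, $\mathbf{a}$, and $\mathbf{w}$. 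Summing over $\mathbf{a} \in X^t$ also shows every preimage $g^{-1}(\mathbf{w})$ has exactly $v^{n-m}$ elements.

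For the forward direction I would let $A_{\mathbf{w}}$ be the array whose rows are the $n$-tuples of $g^{-1}(\mathbf{w})$. The counting statement says that in any $t$ columns of $A_{\mathbf{w}}$ each $t$-tuple appears exactly $v^{r}$ times, so $A_{\mathbf{w}}$ is an OA$_{v^{r}}(t,n,v)$. Since $g$ is a function, the sets $g^{-1}(\mathbf{w})$ partition $X^n$, so the $v^m$ arrays together contain every $n$-tuple exactly once: a large set. For the converse I would reverse this. Given a large set $\{A_{\mathbf{w}} : \mathbf{w} \in X^m\}$, define $g(\mathbf{x}) = \mathbf{w}$ when $\mathbf{x}$ is a row of $A_{\mathbf{w}}$; the large-set property makes $g$ well defined on all of $X^n$. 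The number of inputs with $\mathbf{x}|_T = \mathbf{a}$ mapping to $\mathbf{w}$ is the number of occurrences of $\mathbf{a}$ in columns $T$ of $A_{\mathbf{w}}$, namely $v^{r}$, and dividing by the $v^{n-t}$ inputs fixing $\mathbf{a}$ on $T$ gives probability $1/v^m$, so $g$ is resilient.

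The hard part will not be conceptual but bookkeeping: carefully matching the orthogonal-array index $\lambda = v^{r}$ with the resilient probability $1/v^m$, verifying that a large set indeed has exactly $v^m$ members under the substitution $r = n-m-t$, and making explicit that the labelling of the member arrays by elements of $X^m$ is precisely the data that turns the two constructions into mutually inverse maps, so that the word \emph{equivalent} is justified in both directions.
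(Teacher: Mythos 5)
Your proof is correct, and the parameter bookkeeping all checks out: with $r=n-m-t$ the preimages $g^{-1}(\mathbf{w})$ each have $v^{n-m}=v^{r}\cdot v^{t}$ rows and form $v^{m}=v^{n-r-t}$ pairwise disjoint OA$_{v^{r}}(t,n,v)$ partitioning $X^n$, and the converse count $v^{r}/v^{n-t}=1/v^{m}$ recovers resilience. Note that the paper itself gives no proof of this statement --- it is quoted from the cited reference \cite{St-LOA} --- and your argument via preimages is precisely the standard one given there, so there is nothing to add beyond the labelling caveat you already flagged (the equivalence identifies the member arrays with the elements of $X^m$, i.e.\ it is an equivalence up to relabelling of outputs).
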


\begin{Theorem}
\label{AONT-RF}
Suppose there is a $(t,s,v)$-AONT. Then there is an $(s,s-t,t,v)$-resilient function.
\end{Theorem}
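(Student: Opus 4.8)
The plan is to prove Theorem \ref{AONT-RF} by combining the structural equivalence of AONT with arrays (Theorem \ref{equiv}), the connection between AONT and orthogonal arrays (Theorem \ref{AONT-OA}), and the characterization of resilient functions as large sets of orthogonal arrays (Theorem \ref{RF-LOA}). The target resilient function has parameters $(n,m,t,v) = (s, s-t, t, v)$, so by Theorem \ref{RF-LOA} it suffices to construct a large set of $\mathrm{OA}_{v^{\,n-m-t}}(t,n,v) = \mathrm{OA}_{v^{0}}(t,s,v) = \mathrm{OA}(t,s,v)$: that is, a family of $v^{\,n-m-t} = v^{s - (s-t) - t} = v^{0} = 1$\ldots which I should double-check, since a large set of index-one orthogonal arrays would consist of $v^{n-r-t} = v^{s - 0 - t} = v^{s-t}$ distinct $\mathrm{OA}(t,s,v)$'s that together partition all $v^s$ possible $s$-tuples.

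First I would represent the given $(t,s,v)$-AONT as a $(v^s, 2s, v)$-array $A$, as licensed by Theorem \ref{equiv}. The crucial observation is that the $v^s$ rows of $A$ can be sorted according to the value of the fixed $(s-t)$-tuple appearing in some designated set of $s-t$ output columns; but more naturally, I would partition the rows of $A$ according to the full content of the last $s$ columns. Following the construction in the proof of Theorem \ref{AONT-OA}, for each fixed assignment of the last $s-t$ output coordinates, the set $R$ of rows agreeing with that assignment has size $v^t$, and restricting to the first $s$ columns yields an $\mathrm{OA}(t,s,v)$. The key point is that as the fixed $(s-t)$-tuple ranges over all $v^{s-t}$ possibilities, these arrays are disjoint on rows and their union is all of $A$, hence their first-$s$-column projections together contain all $v^s$ input tuples exactly once (since the AONT is a bijection, every input tuple appears exactly once across $A$). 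This produces precisely $v^{s-t}$ distinct $\mathrm{OA}(t,s,v)$'s partitioning $\eff_q^{s}$ — a large set.

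The main obstacle, and the step I would develop most carefully, is verifying that each of the $v^{s-t}$ sub-arrays is genuinely an $\mathrm{OA}(t,s,v)$ rather than merely a set of $v^t$ rows, and that collectively they form a large set in the exact sense of the definition preceding Theorem \ref{RF-LOA}. The orthogonal-array property for each piece is exactly the content of Theorem \ref{AONT-OA} (one invokes the unbiasedness condition of Theorem \ref{equiv} item (3), which guarantees that after fixing the $s-t$ complementary output columns, every $t$-tuple occurs in any chosen $t$ input columns exactly once). The partition property follows because fixing distinct $(s-t)$-tuples selects disjoint row sets, and the union recovers all rows of $A$; since $\phi$ is a bijection, the first $s$ columns of $A$ list each element of $X^s$ exactly once, so the projected arrays contain every $s$-tuple exactly once in total. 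Having assembled a large set of $\mathrm{OA}_{v^0}(t,s,v) = \mathrm{OA}(t,s,v)$ with the correct index $v^{\,s-(s-t)-t} = v^0 = 1$, Theorem \ref{RF-LOA} immediately yields the desired $(s,s-t,t,v)$-resilient function, completing the proof.
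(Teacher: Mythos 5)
Your proposal is correct and follows essentially the same route as the paper's own proof: partition the rows of the $(v^s,2s,v)$-array by the value of the last $s-t$ output columns, invoke Theorem \ref{AONT-OA} to see each block projects to an OA$(t,s,v)$, note that the $v^{s-t}$ blocks together cover every input $s$-tuple exactly once, and apply Theorem \ref{RF-LOA}. Your self-correction on the count ($v^{s-t}$ arrays of index $1$, not a single array) lands on the right reading of the definition, so no gap remains.
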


\begin{proof}
We use the same  technique that was used in the proof of Theorem \ref{AONT-OA}.
Let $A$ be the $(v^2,2s,v)$-array representing the AONT.
For any $(s-t)$-tuple $\mathbf{x}$, let $R_{\mathbf{x}}$ be the rows of $A$ that contain
$\mathbf{x}$ in the last $s-t$ columns of $A$. Let $A'_{\mathbf{x}}$ denote the submatrix
of $A$ indexed by the columns in $R_{\mathbf{x}}$ and the first $s$ columns. 
Theorem \ref{AONT-OA} showed that $A'_{\mathbf{x}}$ is an OA$(t,s,v)$.

Now, consider all $v^{s-t}$ possible $(s-t)$-tuples $\mathbf{x}$. For each choice
of $\mathbf{x}$, we get an OA$(t,s,v)$. These $v^{s-t}$ orthogonal arrays together contain
all $v^s$ $s$-tuples, since the array $A$ is unbiased with respect to the first $s$ columns.
Thus we have a large set of OA$_{1}(t,s,v)$. Applying Theorem \ref{RF-LOA}, this large set of
OAs is equivalent to an $(s,s-t,t,v)$-resilient function (note that $m = s-t$ because
$v^{s-m-t} = 1$).
\end{proof}

\section{Summary and Open Problems}
\label{summary.sec}

In this paper, we have begun a study of $t$-all-or-nothing transforms over alphabets
of arbitrary size. There are many interesting open problems suggested
by the results in this paper. We list some of these now.

\begin{enumerate}
\item Are there  infinitely many primes $p$ for which there exist linear $(2,p,p)$-AONT?
\item Are there  infinitely many primes $p$ for which there exist skew-symmetric linear $(2,p,p)$-AONT?
\item Are there any prime powers $q = p^i > 4$ with $i \geq 2$ for which 
there exist linear $(2,q,q)$-AONT?
\item As mentioned in Section \ref{computer.sec}, 
we performed exhaustive searches for linear $(2,q,q)$-AONT in type $q-1$ standard form, for 
all primes and prime powers $q \leq 9$, and found that no such AONT exist. We ask if there
exists any linear $(2,q,q)$-AONT in type $q-1$ standard form.
\item For $p = 3,5$, there are easily constructed examples of \emph{symmetric} linear $(2,p,p)$-AONT
in standard form (where ``symmetric'' means that $M = M^T$). 
But there are no symmetric examples for $p = 7$ or $11$. We ask if there
exists any symmetric linear $(2,p,p)$-AONT in standard form for a prime $p > 5$.
\item Theorem
\ref{cofactor} showed that a linear $(t,s-1,q)$-AONT exists whenever 
 a linear $(t,s,q)$-AONT exists. Does an analogous result hold for arbitrary
 (linear or nonlinear) AONT?
\item We proved in Theorem \ref{nonexistence} that, if a  linear $(2,s,q)$-AONT exists,  then $s \leq q$.
On the other hand,  for 
arbitrary (linear or nonlinear) $(2,s,v)$-AONT, 
we were only able to  show that $s \leq v+1$ (Corollary \ref{bound2}).
Can this second bound be strengthened to $s \leq v$, analogous to the linear case?
\item In the case $t=3$, we have one existence result
(Theorem \ref{cauchy}) and one necessary condition (Corollary \ref{bound3}).
What additional results can be proven about existence or nonexistence of $(3,s,v)$-AONT?
\end{enumerate}

\section*{Acknowledgements}

This work benefitted from the use of the CrySP RIPPLE Facility at
the University of Waterloo.


\begin{thebibliography}{XX}

  
\bibitem{CD}
C.J.\ Colbourn and J.H.\ Dinitz, eds.
{\em The CRC Handbook of Combinatorial Designs, Second Edition},
CRC Press, 2006.

 
 \bibitem{DES}
 P.\ D'Arco, N.\ Nasr Esfahani and D.R.\ Stinson.
 All or nothing at all. 
 {\em Electronic Journal of Combinatorics}  \textbf{23(4)} (2016), paper \#P4.10, 24 pp.
 
 \bibitem{GS}
 K.\ Gopalakrishnan and D.R.\ Stinson.
 Three characterizations of non-binary correlation-immune and resilient functions. 
 {\em Designs, Codes and Cryptography}  \textbf{5} (1995), 241--251.
 
 
 \bibitem{Mersenne}
Great Internet Mersenne Prime Search.
\url{https://www.mersenne.org}. Page retrieved Feb. 20, 2017.

 
 \bibitem{MacSl}
 F.J.\ MacWilliams and N.J.A.\ Sloane.
 {\em The Theory of Error-Correcting Codes}.
 North-Holland, 1977.
 
  
\bibitem{ES}
 N.\ Nasr Esfahani and D.R.\ Stinson.
Computational results on invertible matrices with the 
maximum number of invertible $2 \times 2$ submatrices.
Submitted for publication.


\bibitem{R}
R.L.\ Rivest.
All-or-nothing encryption and the package transform.
{\em Lecture Notes in Computer Science}
{\bf 1267} (1997),
 210--218 (Fast Software Encryption 1997).
 

 
\bibitem{St} D.R.\ Stinson.
Something about all or nothing (transforms).
\emph{Designs, Codes and Cryptography} \textbf{22} (2001), 133--138.


\bibitem{ZZWG}
Y.\ Zhang, T.\ Zhang, X.\ Wang and  G.\ Ge, 
Invertible binary matrices with maximum number of 2-by-2 invertible submatrices,
{\it Discrete Mathematics} {\bf 340} (2017) 201--208.


\bibitem{St-LOA}
D.R.\ Stinson.
Resilient functions and large sets of orthogonal arrays.
\emph{Congressus Numerantium} \textbf{92} (1993), 105--110.

\bibitem{StMa}
D.R.\ Stinson and J.L.\ Massey.
An infinite class of counterexamples to a conjecture concerning nonlinear resilient functions.
\emph{Journal of Cryptology} {\bf 8} (1995), 167--173.


\end{thebibliography}
\end{document}